

\documentclass[11pt, pb-diagram]{amsart}\usepackage{pb-diagram} 

\usepackage[utf8]{inputenc} 


\newtheorem{theorem}{Theorem}[section]
\newtheorem{corollary}[theorem]{Corollary}

\newtheorem{lemma}[theorem]{Lemma}
\newtheorem{example}{Example}

\theoremstyle{definition}
\newtheorem{definition}[theorem]{Definition}
\newtheorem{remark}[theorem]{Remark}

\usepackage{graphicx} 


\title { Poincar\'e maps and suspension flows: a categorical remark }
\author{Tomoharu Suda}
\address{Faculty of Mathematics, Keio University}
\email{tomoharu.suda@keio.jp}

\begin{document}

\maketitle 

\begin{abstract}
Poincar\'e maps and suspension flows are examples of fundamental constructions in the study of dynamical systems. 
This study aimed to show that these constructions define an adjoint pair of functors if categories of dynamical systems are suitably set. First, we consider the construction of Poincar\'e maps in the category of flows on topological manifolds, which are not necessarily smooth. We show that well-known results can be generalized and the construction of Poincar\'e maps is functorial, if a category of flows with global Poincar\'e sections is adequately defined. Next, we consider the construction of suspension flows and its functoriality. Finally, we consider the adjointness of the constructions of Poincar\'e maps and suspension flows. By considering the naturality, we can conclude that the concepts of topological equivalence or topological conjugacy of flows are not sufficient to describe the correspondence between map dynamical systems and flows with global Poincar\'e sections. We define another category of flows with global Poincar\'e sections and show that the suspension functor and the Poincar\'e map functor form an adjoint equivalence if these categories are considered. Hence, a categorical correspondence between map dynamical systems and flows with global Poincar\'e sections is obtained. This will enable us to better understand the connection between map dynamical systems and flows. 
\end{abstract}

\section{Introduction }

Poincar\'e map and suspension flow constructions are fundamental tools employed in the study of dynamical systems. They are used to reduce a problem concerning continuous-time systems to one of discrete-time systems or vice versa, thereby connecting the two major types of dynamical systems \cite{hasselblatt2002handbook, katok1997introduction, robinson1998dynamical}. 

Results on their relationship are scattered across the literature, and systematic treatments are scarce. However, by collecting these  results, we can easily observe that a categorical relationship may exist between them. For example, the following properties are known:
\begin{itemize}
	\item If two diffeomorphisms are topologically conjugate, then their suspensions are topologically conjugate (Proposition 5.38 in \cite{irwin2001smooth}).
	\item A flow with a Poincar\'e section is locally topologically equivalent to the suspension of its Poincar\'e map (Theorem 5.40 in \cite{irwin2001smooth}).
	\item Every diffeomorphism on a compact manifold is topologically conjugate with the Poincar\'e map of its suspension (Proposition 3.7 in \cite{palis2012geometric}).
\end{itemize}
 In the case of flows with global sections, stronger properties hold because Poincar\'e maps can be defined globally:
\begin{itemize}
	\item Topological equivalence of two flows can be determined in terms of Poincar\'e maps (Theorem 1 in \cite{basener2002global}, Proposition 1.11 in \cite{phdthesis}).
	\item A flow with a global section is topologically equivalent to the suspension of its Poincare map (Theorem 3.1 in \cite{yang2000remark}).
\end{itemize}
In loose terms, these results can be summarized as follows: isomorphisms are preserved under the constructions of Poincar\'e maps and suspension flows, and a Poincar\'e map of a suspension or a suspension of a Poincar\'e map can be identified with the original map or flow. These statements  suggest the existence of categorical equivalence between a category of map dynamical systems and one of flows .

Some categorical aspects of these constructions have been considered in the case of isomorphisms with topological conjugacy \cite{cestau2017prolongations}. However, their relation remains unclear because it depends on the choice of categories.  For example, some of the results mentioned above are not true if one uses topological conjugacy instead of topological equivalence to define isomorphisms. 

This study aimed to perform a categorical treatment of the constructions of Poincar\'e maps and suspension flows in order to describe the exact relationship between them. This will enable us to unify the known results listed above and also ``prove" the folklore correspondence of various notions between discrete-time and continuous-time systems, such as that of topological conjugacy and topological equivalence.

The rest of this paper is organized as follows. In Section \ref{sec_cat}, we define several categories of dynamical systems. In Section \ref{sec_poincare}, we first introduce the notion of topological transversality for topological manifolds and continuous flows. We show that Poincar\'e maps can be defined analogously to the smooth case. Then, we define categories of flows with global Poincar\'e sections to show that the construction of Poincar\'e maps is functorial. In Section \ref{sec_ps}, we study the categorical relationship between Poincar\'e maps and suspension flows. We show that these two form a pair of adjoint equivalence if the categories are selected properly. Finally, in Section \ref{sec_conclude}, we present some concluding remarks.

\section{Categories  of dynamical systems}\label{sec_cat}
In this section, we define various categories of dynamical systems to set up for the discussion later.

In what follows, topological manifolds are assumed to be second countable and Hausdorff. For the definitions of the concepts and basic results of category theory, we refer to \cite{mac1998categories, bradley2020topology, riehl2017category}.
\begin{definition}
A \emph{map dynamical system} is a pair $(f,X)$ of a topological manifold (without boundary) $X$ and a homeomorphism $f:X \to X.$
A \emph{morphism} $h:(f,X) \to (g,Y)$ between map dynamical systems is a continuous map $h: X \to Y$ such that  $h \circ f = g \circ h.$
\end{definition}
\begin{definition}
A \emph{flow} is a pair $(\Phi,X)$ of a topological manifold (without boundary) $X$ and a continuous map $\Phi:X\times \mathbb{R} \to X$ such that
\begin{enumerate}
	\item For each $x \in X,$ $\Phi(x,0) = x.$
	\item For each $x \in X$ and $s, t \in \mathbb{R},$ $\Phi(\Phi(x,t),s) = \Phi(x, t+s).$ 
\end{enumerate}
A \emph{morphism} $h:(\Phi,X) \to (\Psi,Y)$ between flows is a continuous map $h: X \to Y$ such that $h \left( \Phi(x,t) \right) =  \Phi(h(x),t)$ for all $x \in X$ and $t \in \mathbb{R}.$

A \emph{weak morphism} $(h, \tau):(\Phi,X) \to (\Psi,Y)$ between flows is a pair of a continuous map $h: X \to Y$ and a map $\tau: X \times \mathbb{R} \to \mathbb{R}$ such that 
\begin{enumerate}
	\item $h \left( \Phi(x,t) \right) =  \Psi(h(x),\tau(x,t))$ for all $x \in X$ and $t \in \mathbb{R}.$
	\item For all $x \in X,$ $\tau(x, -): \mathbb{R} \to \mathbb{R}$ is an increasing homeomorphism with $\tau(x,0) = 0.$
	\end{enumerate}
\end{definition}
\begin{lemma}
Each of the following  forms a category if the composition of morphisms is defined by the composition of maps.
\begin{enumerate}
	\item Map dynamical systems and their morphisms. 
	\item Flows and their morphisms.
	\item Flows and weak morphisms.
\end{enumerate}

\end{lemma}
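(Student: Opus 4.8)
The plan is to verify the three category axioms—associativity of composition, existence of identities, and closure under composition—for each of the three cases. Since composition is defined as ordinary composition of maps (or pairs of maps in the weak case), associativity will come for free from associativity of function composition, and the identity morphism in each case will be the identity map $\mathrm{id}_X$ (paired with $\tau(x,t) = t$ in the weak-morphism case). So the only genuine content is closure: checking that the composite of two morphisms again satisfies the defining equivariance condition, and that identities satisfy it too.

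First I would dispatch cases (1) and (2) together, since they are essentially identical. For map dynamical systems, given $h:(f,X)\to(g,Y)$ and $k:(g,Y)\to(e,Z)$, I would compute $(k\circ h)\circ f = k\circ(h\circ f) = k\circ(g\circ h) = (k\circ g)\circ h = (e\circ k)\circ h = e\circ(k\circ h)$, using the two intertwining relations and associativity; continuity of $k\circ h$ is automatic. The identity $\mathrm{id}_X$ trivially satisfies $\mathrm{id}_X\circ f = f = f\circ\mathrm{id}_X$. The flow case (2) is the same computation with the evaluation $\Phi(x,t)$ playing the role of $f$: for $h:(\Phi,X)\to(\Psi,Y)$ and $k:(\Psi,Y)\to(\Xi,Z)$ one checks $(k\circ h)(\Phi(x,t)) = k(\Psi(h(x),t)) = \Xi((k\circ h)(x),t)$.

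The only case requiring real care is (3), weak morphisms, because a morphism is now a pair $(h,\tau)$ and composition must be defined on pairs. The first subtlety is simply to pin down the correct composition rule: given $(h,\tau):(\Phi,X)\to(\Psi,Y)$ and $(k,\sigma):(\Psi,Y)\to(\Xi,Z)$, the composite should be $(k\circ h,\,\rho)$ where the time reparametrization is $\rho(x,t) = \sigma(h(x),\tau(x,t))$. I would then verify the intertwining relation by chaining: $(k\circ h)(\Phi(x,t)) = k(\Psi(h(x),\tau(x,t))) = \Xi(k(h(x)),\sigma(h(x),\tau(x,t))) = \Xi((k\circ h)(x),\rho(x,t))$. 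Next I must check that $\rho$ inherits the two required properties of a time reparametrization: that $\rho(x,-)$ is an increasing homeomorphism of $\mathbb{R}$ with $\rho(x,0)=0$. This follows because $\rho(x,-)$ is the composite $\sigma(h(x),-)\circ\tau(x,-)$ of two increasing homeomorphisms fixing $0$, and such a composite is again an increasing homeomorphism fixing $0$.

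The main obstacle—modest but real—is confirming that this composition law for $\tau$ is associative, since it is more intricate than plain function composition. I would verify this by an explicit unwinding: for a third weak morphism $(\ell,\upsilon):(\Xi,Z)\to(\Theta,W)$, both bracketings of the triple composite yield the time component $\upsilon(k(h(x)),\sigma(h(x),\tau(x,t)))$, so associativity holds. Finally, the identity weak morphism on $(\Phi,X)$ is $(\mathrm{id}_X,\pi_2)$ where $\pi_2(x,t)=t$; one checks $\pi_2$ is an increasing homeomorphism fixing $0$ for every $x$, and that left- and right-composition with $(\mathrm{id}_X,\pi_2)$ returns the original pair, since $\sigma(\mathrm{id}_X(x),\pi_2(x,t)) = \sigma(x,t)$ and $\pi_2(h(x),\tau(x,t)) = \tau(x,t)$. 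These routine verifications complete the proof.
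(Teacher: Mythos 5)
Your proposal is correct and follows essentially the same route as the paper: cases (1) and (2) are routine, and the substance lies in defining the composite time reparametrization as $\rho(x,t) = \sigma(h(x),\tau(x,t))$ and checking that $\rho(x,-)$ is an increasing homeomorphism fixing $0$, which is exactly the paper's argument. You merely spell out the associativity and identity checks that the paper leaves implicit.
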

\begin{proof}
The proof  is obvious for (1) and (2). For (3), we need to verify that the "time-part" composition of the morphism satisfies the conditions of weak morphism. Let $(h_1, \tau_1):(\Phi_1,X_1) \to (\Phi_2,X_2)$ and $(h_2, \tau_2):(\Phi_2,X_2) \to (\Phi_3,X_3)$ be morphisms and define $\tau_2 \circ \tau_1(x,t): = \tau_2(h_1(x), \tau_1(x,t)).$ Then, for all $x \in X_1,$ $\tau_2 \circ \tau_1(x, 0) =  \tau_2(h_1(x), 0) = 0$ and $\tau_2 \circ \tau_1(x, -) $ is a composition of homeomorphisms.
\end{proof}
We call the above a \emph{category of map dynamical systems} $\bf{Map}$, a \emph{category of flows} $\bf{Flow}$, and a \emph{category of flows with weak morphisms} $\bf{WFlow},$, respectively. 
We note that $\bf{Flow}$ can be regarded as a subcategory of $\bf{WFlow},$ as there is an obvious inclusion functor defined by $(\Phi, X) \mapsto (\Phi, X) $ and $\left((\Phi,X) \xrightarrow{h}  (\Psi,Y) \right)\mapsto \left((\Phi,X) \xrightarrow{(h, \rm{id})}  (\Psi,Y) \right).$

Isomorphisms in $\bf{Map}$ and $\bf{Flow}$ are called \emph{topological conjugacies} and isomorphic objects are called  \emph{topologically conjugate}. In $\bf{WFlow}$, isomorphism is called \emph{topological equivalence} and isomorphic objects are called \emph{topologically equivalent}. These definitions coincide with the usual ones.

\begin{remark}
The categories in \cite{cestau2017prolongations} correspond to $\bf{Map}$ or $\bf{Flow}$ in this paper.
\end{remark}
\begin{remark}
Each of the categories defined above has a weakly initial element similar to the ``universal dynamical system" of \cite{riehl2017category}. For example, the system $(\sigma, \mathbb{Z})$ defined by 
\[
	\sigma(n) = n+1
\] 
for all $n \in \mathbb{Z}$ is weakly initial in the category $\bf{Map}.$ Further, the set $\mathbf{Map}\left( (\sigma, \mathbb{Z}), (f, X)\right)$ is isomorphic to the set of all orbits of $(f, X).$ In particular, a morphism $h: (\sigma, \mathbb{Z}) \to (f, X)$ corresponds to an orbit with period $m \in \mathbb{N}$ if and only if $h$ admits the following factorization:
\[
\begin{diagram}
	\node{(\sigma, \mathbb{Z})}
	\arrow{s,t}{}
	\arrow{se,t}{h}\\
	\node{(\sigma, \mathbb{Z}/ m \mathbb{Z})}
	\arrow{e,t}{\bar h}
	\node{(f, X)}
\end{diagram}
\]
Similar constructions can be carried out for $\bf{Flow}$ or $\bf{WFlow}.$
\end{remark}
%
\section{Topological transversality and global Poincar\'e section}\label{sec_poincare}
In this section, we define the concept of topological transversality for continuous flows on topological manifolds. Based on this definition, we show that Poincar\'e maps can be defined in a manner similar to the classical smooth case. Additionally, we introduce categories of flows with global Poincar\'e sections to consider the functoriality of the construction of Poincar\'e maps.

We adopt the definition of topological transversality given in \cite{phdthesis, basener2004every} with a certain modification.
\begin{definition}\label{Top_trans}
Let  $(\Phi, X)$ be a flow, where $X$ is an $n$-dimensional topological manifold. A submanifold $S \subset X$ without a boundary is \emph{topologically transversal} to $\Phi$ if
\begin{enumerate}
	\item $S$ is codimension one and locally flat.
	\item For each $x \in S,$ there exists a neighborhood $U$ of $x$ in $X$ and a homeomorphism $\phi: U \to B \subset \mathbb{R}^n,$ where $B$ is the unit ball such that $\Phi\left(U\cap S \right) = B \cap \mathbb{R}^{n-1}\times \{0\}.$ Further, there exist $\delta_{+}(x) >0$ and $\delta_{-}(x) <0$ such that $\Phi(x, [\delta_{-}(x),0))$ and $\Phi(x, (0,\delta_{+}(x)])$ are contained in different connected components of $U \backslash S$ and $\Phi(x, [\delta_{-}(x),\delta_{+}(x)])) \cap S = \{x\}.$
	Here, $\delta_{+}$ and $\delta_{-}$ can be taken locally uniformly, that is, there exist a neighborhood $V \subset U$ of $x$ and $\delta >0$ such that $\delta_+(y) > \delta$ and $\delta_-(y) < - \delta$ for all $y \in V\cap S.$
	\item For each set of the form $\Phi(y, [a,b]),$ where $y \in X$ and $a, b \in \mathbb{R},$ $\Phi(y, [a,b])\cap S$ is compact in $S$.
\end{enumerate} 
\end{definition}

\begin{lemma}\label{lem_intersection}
Let  $(\Phi, X)$ be a flow and $S \subset X$ be topologically transversal to $\Phi$. Then, for each $x\in S$ and $\epsilon>0,$ there exists an open neighborhood $V$ of $x$ in $X$ such that 
\[
	\Phi(y, [ - \epsilon, \epsilon]) \cap S \neq \emptyset
\]
 for all $y \in V.$
\end{lemma}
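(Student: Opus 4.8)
The plan is to work entirely inside the flat chart $\phi\colon U \to B$ furnished by condition (2) of Definition \ref{Top_trans} and to detect a crossing of $S$ along each orbit by an intermediate value argument. I first reduce to small $\epsilon$: since $\Phi(y, [-\epsilon_0, \epsilon_0]) \subseteq \Phi(y, [-\epsilon, \epsilon])$ whenever $0 < \epsilon_0 \le \epsilon$, it suffices to prove the claim for a single admissible value, so I may assume $\epsilon \le \min\{\delta_+(x), -\delta_-(x)\}$, where $\delta_\pm(x)$ are the numbers attached to $x$ in condition (2). Only the data at the point $x$ enter here; the local uniformity clause of the definition is not needed for this local statement.

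With this choice $[-\epsilon, \epsilon] \subseteq [\delta_-(x), \delta_+(x)]$, so $\Phi(x, [-\epsilon, \epsilon]) \subseteq U$, and $\Phi(x, \epsilon)$ and $\Phi(x, -\epsilon)$ lie in the two distinct connected components of $U \setminus S$. Since $\phi$ is a homeomorphism carrying $U \cap S$ onto $B \cap (\mathbb{R}^{n-1}\times\{0\})$, it carries these two components onto the two halves $\{\xi_n > 0\}$ and $\{\xi_n < 0\}$ of $B \setminus (\mathbb{R}^{n-1}\times\{0\})$, where $\xi_n$ denotes the last coordinate; after relabelling I arrange $\pi_n(\phi(\Phi(x, \epsilon))) > 0$ and $\pi_n(\phi(\Phi(x, -\epsilon))) < 0$, with $\pi_n$ the projection onto the last coordinate.

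I then propagate these two sign conditions to nearby points. The evaluation map $(y, t) \mapsto \Phi(y, t)$ is continuous and carries the compact slice $\{x\} \times [-\epsilon, \epsilon]$ into the open set $U$, so by the tube lemma there is an open neighborhood $W$ of $x$ with $\Phi(W, [-\epsilon, \epsilon]) \subseteq U$. Shrinking $W$ using continuity of $y \mapsto \pi_n(\phi(\Phi(y, \pm\epsilon)))$ produces an open $V \subseteq W$ on which $\pi_n(\phi(\Phi(y, \epsilon))) > 0$ and $\pi_n(\phi(\Phi(y, -\epsilon))) < 0$ for every $y \in V$. For such $y$ the function $g_y(t) := \pi_n(\phi(\Phi(y, t)))$ is defined and continuous on all of $[-\epsilon, \epsilon]$, with $g_y(-\epsilon) < 0 < g_y(\epsilon)$, so the intermediate value theorem gives $t_0 \in (-\epsilon, \epsilon)$ with $g_y(t_0) = 0$; then $\phi(\Phi(y, t_0)) \in B \cap (\mathbb{R}^{n-1}\times\{0\}) = \phi(U \cap S)$, whence $\Phi(y, t_0) \in S$ and $\Phi(y, [-\epsilon, \epsilon]) \cap S \neq \emptyset$.

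I expect the third step to be the main obstacle: it is essential that the \emph{entire} orbit segment of each nearby $y$, not merely its two endpoints, stay inside the single chart $U$, for otherwise the coordinate function $g_y$ need not be globally defined on $[-\epsilon, \epsilon]$ and the intermediate value argument breaks down. The tube lemma is exactly what supplies this uniform confinement, reducing the whole problem to the one-dimensional sign computation above; note that condition (3) of Definition \ref{Top_trans} plays no role in this local statement.
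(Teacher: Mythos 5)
Your proof is correct and follows essentially the same route as the paper's: confine the orbit segment $\Phi(y,[-\delta,\delta])$ of nearby points inside the flat chart $U$, arrange that the two endpoints land on opposite sides of $S$, and detect the crossing via the $n$-th coordinate of $\phi$ (the intermediate value argument the paper leaves implicit). Your version is slightly more explicit about the sign propagation and the reduction to small $\epsilon$, but the substance is identical.
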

\begin{proof}
Let $U$ be a neighborhood of $x$ satisfying the condition of (2) in Definition \ref{Top_trans}. 
By the continuity of $\Phi$, there exist a neighborhood $V_0$ of $x$ and $\delta > 0$ such that $\Phi(V_0, [-\delta, \delta]) \subset U$ and $\delta < \min\left(\epsilon, \delta_+(x), -\delta_-(x)\right).$  Then, we have $\Phi(x, \delta), \Phi(x, -\delta)  \in U\backslash S.$ We take neighborhoods $V_+$ and $ V_-$ of $\Phi(x, \delta)$ and $ \Phi(x, -\delta),$ respectively, such that $V_+ \subset U\backslash S$ and $V_-  \subset U\backslash S.$ Let $V := \Phi(V_+, -\delta) \cap  \Phi(V_-, \delta) \cap V_0.$ By considering the $n$-th coordinate of the homeomorphism $\phi$, we obtain that $\Phi(y, [-\epsilon, \epsilon]) \cap S \neq\emptyset$ for all $y \in V.$
\end{proof}

The next lemma excludes the possibility of sequences that  return to the section very frequently.
\begin{lemma}\label{no_freq_seq}
Let  $(\Phi, X)$ be a flow and $S \subset X$ be topologically transversal to $\Phi$. If $x \in S$, there exist no sequences $x_n \in S$ and $t_n >0$ such that $x_n \to x$ and $t_n \to 0$ as $n\to \infty$ and $\Phi(x_n, t_n) \in S.$
\end{lemma}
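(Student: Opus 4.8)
The plan is to argue by contradiction, converting the sequential hypothesis directly into a violation of the local uniformity built into condition (2) of Definition \ref{Top_trans}. So I would begin by supposing that such sequences $x_n \in S$ and $t_n > 0$ exist, with $x_n \to x$ and $t_n \to 0$ as $n \to \infty$, while $\Phi(x_n, t_n) \in S$ for every $n$.

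First I would invoke the local uniformity clause of condition (2) at the point $x$: there exist a neighborhood $V \subset U$ of $x$ and a constant $\delta > 0$ such that for every $y \in V \cap S$ the associated numbers satisfy $\delta_+(y) > \delta$ and $\delta_-(y) < -\delta$, with $\Phi(y, [\delta_-(y), \delta_+(y)]) \cap S = \{y\}$. The consequence I want to extract is that for each such $y$ the forward arc $\Phi(y, (0, \delta])$ misses $S$ entirely: indeed $(0, \delta] \subset (0, \delta_+(y)]$, and any point $\Phi(y, t) \in S$ with $t \in [\delta_-(y), \delta_+(y)]$ is forced to satisfy $t = 0$.

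Next I would use the two convergences to place the sequence inside this uniform regime. Since $V$ is open in $X$ and $x_n \to x$ with $x_n \in S$, we have $x_n \in V \cap S$ for all sufficiently large $n$; since $t_n \to 0$ with $t_n > 0$, we also have $0 < t_n \le \delta$ for all sufficiently large $n$. For such $n$ the preceding paragraph gives $\Phi(x_n, t_n) \notin S$, which contradicts the standing hypothesis $\Phi(x_n, t_n) \in S$. This contradiction establishes the lemma.

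The argument is essentially a bookkeeping translation, so the one point requiring care — and the step I would verify most carefully — is that the uniform lower bound $\delta$ on $\delta_+$ comes genuinely packaged with the no-return property $\Phi(y, [\delta_-(y), \delta_+(y)]) \cap S = \{y\}$, rather than merely asserting that the quantities $\delta_\pm(y)$ are bounded away from zero in isolation. This is exactly what the local uniformity clause of Definition \ref{Top_trans} supplies, so no separate transversality estimate is needed; the content of the lemma is precisely to record the uniform lower bound on return times as a statement about sequences accumulating at a point of $S$.
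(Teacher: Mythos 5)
Your argument is correct and is essentially identical to the paper's proof: both invoke the local uniformity clause of Definition \ref{Top_trans} to get a neighborhood $V$ of $x$ and a uniform $\delta>0$ with $\delta_+(y)>\delta$ on $V\cap S$, then note that for large $n$ the points $x_n$ lie in $V\cap S$ while $0<t_n\leq\delta<\delta_+(x_n)$ forces $\Phi(x_n,t_n)\notin S$, a contradiction. The point you flag for care — that the uniform bound is attached to the same $\delta_\pm(y)$ carrying the no-return property $\Phi(y,[\delta_-(y),\delta_+(y)])\cap S=\{y\}$ — is indeed exactly what the definition supplies, and the paper uses it in the same way.
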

\begin{proof}
Let $U$ be a neighborhood of $x$ satisfying condition (2) in Definition \ref{Top_trans}. Let $V \subset U$ be a neighborhood of $x$ such that there exists $\delta >0$ with $\delta_+(y) > \delta$ for all $y \in V \cap S.$ If $x_n \in S$ and $t_n >0$ are sequences such that $x_n \to x$ and $t_n \to 0$ as $n\to \infty$ and $\Phi(x_n, t_n) \in S,$ then $x_n \in V \cap S$ and consequently $t_n > \delta$ for a sufficiently large $n$. This is a contradiction.
\end{proof}
\begin{definition}
Let  $(\Phi, X)$ be a flow. A submanifold $S \subset X$ is a \emph{global Poincar\'e section} if
\begin{enumerate}
	\item $S$ is topologically transversal to $\Phi.$ 
	\item For each $x \in X,$ there exists $t_+ >0$ and $t_- < 0$ such that $\Phi(x, t_{+}) \in S$ and $\Phi(x, t_{-}) \in S$.
\end{enumerate}
\end{definition}
\begin{remark}
If the phase space is compact, condition (2) can be weakened to the condition that each $x \in X$ has  $t \in \mathbb{R}$ such that $\Phi(x, t) \in S.$ Indeed, let $x \in S$ and consider the $\omega$-limit set of $x.$ Then, $\omega(x) \cap S$ is nonempty by the invariance of the limit set. By Lemma \ref{lem_intersection}, we observe that there exists $t_+ > 0$ such that $\Phi(x, t_{+}) \in S.$ The existence of $t_-$ is proved similarly.
\end{remark}
\begin{remark}
By definition, a flow with a global Poincar\'e section has no equilibrium points. By using the argument in \cite{basener2002global}, we can show that a smooth flow without equilibrium points has a global Poincar\'e section if the phase space is compact.
\end{remark}
According to these definitions, we have the following generalization of well-known results.
\begin{theorem}\label{P_thm}
Let  $(\Phi, X)$ be a flow  and $S \subset X$ be topologically transversal to $\Phi$. If $x_0 \in S$ and there exists $t_+ > 0$ such that  $\Phi(x_0,t_+) \in S,$ there exist a neighborhood $U$ of $x_0$ in $X$ and continuous maps $P\Phi: U\cap S \to S$ and $T_\Phi:U\cap S \to (0,\infty)$ such that
\[ P\Phi(x) = \Phi(x, T_\Phi(x))\]
for each $x \in U\cap S$ and $\Phi(x, t) \not \in S$ for $0<t<T_\Phi(x).$
Further, if $S$ is a global Poincar\'e section, $P \Phi$ is defined on the entire $S$, and it is a homeomorphism.
\end{theorem}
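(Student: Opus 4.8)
The plan is to construct $T_\Phi$ as a first-return time, verify its continuity locally, and then globalize.

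First I would define, for the given point $x_0$, the first-return time $T_\Phi(x_0) := \min\{t \in (0, t_+] : \Phi(x_0, t) \in S\}$ and show it is well defined and strictly positive. The set $\{t \in [0,t_+] : \Phi(x_0,t) \in S\}$ is nonempty (it contains $t_+$) and closed: if $t_n \to t^*$ are such times, then $\Phi(x_0, t_n) \to \Phi(x_0, t^*)$, and since the points $\Phi(x_0, t_n)$ lie in $\Phi(x_0, [0,t_+]) \cap S$, which is compact in $S$ by condition (3) of Definition \ref{Top_trans}, the Hausdorff property forces $\Phi(x_0, t^*) \in S$. Hence the set of return times in $(0, t_+]$ is compact, so its infimum is attained; and it cannot be $0$, for otherwise a sequence $t_n \downarrow 0$ with $\Phi(x_0, t_n) \in S$ (taking $x_n \equiv x_0$) would contradict Lemma \ref{no_freq_seq}. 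This yields $T_\Phi(x_0) > 0$ with $\Phi(x_0, t) \notin S$ for $0 < t < T_\Phi(x_0)$, and I set $y_0 := \Phi(x_0, T_\Phi(x_0)) \in S$.

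Next I would produce $T_\Phi$ on a neighborhood of $x_0$ in $S$ and prove continuity. Applying condition (2) at $y_0$ gives a flow box $\phi : U_{y_0} \to B$ in which $S$ is the hyperplane $\{\,\text{$n$-th coordinate}=0\,\}$ and the orbit crosses it with a sign change. Reading off the $n$-th coordinate and using the intermediate value theorem together with Lemma \ref{lem_intersection}, I obtain a neighborhood $W$ of $y_0$ and a continuous function $\theta : W \to \mathbb{R}$ with $\theta(y_0) = 0$ and $\Phi(z, \theta(z)) \in S$ for all $z \in W$. For $x$ close to $x_0$ the point $\Phi(x, T_\Phi(x_0))$ lies in $W$, so $x \mapsto T_\Phi(x_0) + \theta(\Phi(x, T_\Phi(x_0)))$ is a continuous return-time candidate near $T_\Phi(x_0)$; as it is some (not necessarily first) return time, this already gives $\limsup_{x \to x_0} T_\Phi(x) \le T_\Phi(x_0)$. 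Setting $P\Phi(x) := \Phi(x, T_\Phi(x))$ and shrinking $U$ so that both constructions apply produces the required continuous maps on $U \cap S$.

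The main obstacle is the reverse inequality, namely ruling out that nearby initial points return to $S$ strictly earlier than $T_\Phi(x_0)$ (lower semicontinuity of the first-return time). Near the start this is controlled by the local uniformity of $\delta_\pm$ in condition (2): there are $\delta > 0$ and a neighborhood $V \ni x_0$ with $\Phi(x, (0, \delta)) \cap S = \emptyset$ for all $x \in V \cap S$. The delicate part is the intermediate arc $\Phi(x_0, [\delta, T_\Phi(x_0)])$, which meets $S$ only at its endpoint; I must exclude the possibility that points of $S$ accumulate onto its interior along nearby orbits. I expect to argue by contradiction: a sequence $x_k \to x_0$ with return times $r_k \to r^* \in (0, T_\Phi(x_0))$ would give $\Phi(x_k, r_k) \in S$ converging to $\Phi(x_0, r^*) \notin S$, and I would use condition (3) applied to the relevant orbit arc together with Lemma \ref{lem_intersection} at $x_0$ to force a return of $x_0$ itself inside $(0, T_\Phi(x_0))$, a contradiction. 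This compactness bookkeeping is where conditions (2) and (3) do the real work. Finally, for the global statement: if $S$ is a global Poincar\'e section, every $x \in S$ satisfies the hypothesis, so $T_\Phi$ and $P\Phi$ are defined on all of $S$, with continuity holding everywhere since it is local. Running the identical construction backward—legitimate because topological transversality is symmetric under $t \mapsto -t$ and the section condition supplies $t_- < 0$—yields a continuous first backward-return map $Q\Phi : S \to S$. Since $P\Phi(x) = \Phi(x, T_\Phi(x))$ is the first forward hit, the first backward hit of $P\Phi(x)$ is $x$ itself, so $Q\Phi \circ P\Phi = \mathrm{id}_S$, and symmetrically $P\Phi \circ Q\Phi = \mathrm{id}_S$; hence $P\Phi$ is a homeomorphism, which would complete the argument.
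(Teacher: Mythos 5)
Your overall strategy matches the paper's: define $T_\Phi$ as a first return time whose existence and positivity come from condition (3) of Definition \ref{Top_trans} and Lemma \ref{no_freq_seq}, get the upper bound $\limsup_{x\to x_0} T_\Phi(x)\le T_\Phi(x_0)$ by applying Lemma \ref{lem_intersection} at the return point, prove the lower bound by contradiction, and invert $P\Phi$ via the time-reversed flow (your identity ``the first backward hit of $P\Phi(x)$ is $x$'' is exactly the paper's $T_\Psi\circ P\Phi=T_\Phi$). The existence step, the $\limsup$ bound, and the global statement are sound; the continuous selection $\theta$ is more than the intermediate value theorem alone guarantees, but this is harmless since you only use the existence of a return time in $[T_\Phi(x_0)-\epsilon,\,T_\Phi(x_0)+\epsilon]$, which Lemma \ref{lem_intersection} already supplies.

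The genuine gap is in the lower-semicontinuity step, which you correctly single out as the delicate part but then only gesture at. Having disposed of small returns via the uniform $\delta$ of condition (2), you are left with $x_k\to x_0$ in $S$ and return times $r_k\to r^*\in[\delta,\,T_\Phi(x_0)-\epsilon]$, and you must derive a contradiction from $\Phi(x_k,r_k)\in S$. The two tools you name do not deliver it. Lemma \ref{lem_intersection} can only be invoked at a point \emph{of} $S$, and $\Phi(x_0,r^*)$ is precisely the point whose membership in $S$ is at issue, so ``Lemma \ref{lem_intersection} at $x_0$'' gives you nothing about the arc near time $r^*$. Condition (3) gives compactness (hence closedness in $X$) of $\Phi(y,[a,b])\cap S$ for a \emph{single} orbit arc --- this is exactly what made your existence argument work, where all the hits lay on the one arc $\Phi(x_0,[0,t_+])$ --- but here the hits $\Phi(x_k,r_k)$ lie on the distinct arcs of the $x_k$, so no single application of condition (3) contains them, and $S$ itself is not assumed closed in $X$. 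What you still owe is the assertion that a convergent sequence of section hits on nearby orbits has limiting return time equal to $0$ or at least $T_\Phi(x_0)$; the paper's proof closes the step by passing to a convergent subsequence of the early return times, arguing that the limiting time is $0$, and then contradicting Lemma \ref{no_freq_seq} with that sequence. As written, your argument does not close this case.
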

\begin{proof}
First, we show the existence of $T_\Phi(x)$ and $P \Phi(x)$ for each $x \in U\cap S,$ where $U$ is a neighborhood of $x_0$ in $X.$ Let $0<r< t_+$ and take a neighborhood $V$ of $\Phi(x_0,t_+)$ by applying Lemma  \ref{lem_intersection} with $\epsilon = r$ and $x = \Phi(x_0,t_+).$ Let $U := \Phi(V, -t_+).$ Then, we have
\[
	\Phi(y, [t_+ -r, t_+ +r]) \cap S \neq \emptyset
\]
for all $y \in U.$ We define
\[
	\mathcal{T}(x) := \{t >0 \mid \Phi(x,t) \in S\}
\]
for each $x \in U\cap S.$
By the choice of $U$, $\mathcal{T}(x)$ is nonempty. If we set $T_\Phi(x) := \inf \mathcal{T}(x),$ we have $T_\Phi(x) \geq \delta_+(x)>0.$ Let $t_n \in \mathcal{T}(x)$ be a sequence with $t_n \to T_\Phi(x) $ as $n\to \infty.$ For a sufficiently large $a >0,$ we have $\Phi(x, t_n) \in S \cap \Phi(x, [0,a])$ for all $n.$ Therefore, $\Phi(x, T_\Phi(x)) \in S$ from the continuity of $\Phi$ and the compactness of $S \cap \Phi(x, [0,a]).$ These results indicate that $T_\Phi(x)$ has the desired properties. We set $P\Phi(x) := \Phi(x, T_\Phi(x)).$

Let us now show that $T_\Phi : U\cap S \to (0,\infty)$ is continuous. Let $x \in U\cap S$ and $\epsilon $ be an arbitrary positive number less than $T_\Phi(x)$. 

By Lemma \ref{lem_intersection}, there exists an open neighborhood $V_1 \subset U$ of $P\Phi (x) = \Phi(x, T_\Phi(x))$ such that  $\Phi(y, [ - \epsilon,  \epsilon]) \cap S \neq \emptyset$ for all $y \in V_1.$ By the continuity of $\Phi,$ there exists an open neighborhood $U_1$ of $x$ such that  $\Phi(U_1, T_\Phi(x)) \subset V_1$. Therefore, we have
\[
	\Phi(y, [T_\Phi(x) - \epsilon, T_\Phi(x) + \epsilon]) \cap S \neq \emptyset
\]
for all $y \in U_1.$ 

We show that there exists an open neighborhood $U_2 \subset U$ of $x$ such that 
\[\Phi(y, (0, T_\Phi(x) - \epsilon)) \cap S = \emptyset\]
 for all $y \in U_2 \cap S.$ If this is not the case, we may take sequences $x_n \in S \cap U$ and $s_n \in (0, T_\Phi(x) - \epsilon)$ so that $\Phi(x_n, s_n) \in S$ and $x_n \to x$ as $n \to \infty.$ As $s_n \in [0, T_\Phi(x) - \epsilon],$ we may take a convergent subsequence $s_{n_i} \to s \in [0, T_\Phi(x) - \epsilon]$ as $i \to \infty.$ Using the continuity of $\Phi$, we observe that $s = 0.$ Thus, we obtain sequences $y_n \in S\cap U$ and $t_n \in (0, T_\Phi(x) - \epsilon)$ so that $y_n \to x$ and $t_n \to 0$ as $n \to \infty.$ However, this is impossible by Lemma \ref{no_freq_seq}.

Therefore, there exists an open neighborhood $U_0:= U_1 \cap U_2 \subset U$ of $x$ such that $\Phi(y, (0, T_\Phi(x) - \epsilon)) \cap S = \emptyset$  and $\Phi(y, [T_\Phi(x) - \epsilon, T_\Phi(x) + \epsilon]) \cap S \neq \emptyset$ for all $y \in U_0 \cap S.$ Together, these imply 
\[
	T_\Phi(x) - \epsilon \leq T_\Phi(y) \leq T_\Phi(x) + \epsilon
\]
for all $y \in U_0 \cap S.$ Therefore, $T_\Phi(x)$ is continuous and consequently $P \Phi(x)$ is also continuous.

If $S$ is a global Poincar\'e section, it is clear that $T_\Phi$ and $P \Phi $ are defined on the entire $S$.
By the definition of a global Poincar\'e section, the same constructions can be carried out for $\Psi(x,t) := \Phi(x, -t).$ Then, we have 
\[
	\begin{aligned}
		T_\Psi  &= T_\Phi \circ P\Psi\\
		T_\Phi &= T_\Psi \circ P \Phi.
	\end{aligned}
\]
These are established as follows. For $x \in S,$ we have $\Phi(P\Psi(x), T_\Psi(x)) = x \in S$ by definition. Therefore, $T_\Psi(x) \geq T_\Phi(P\Psi(x)).$ On the other hand, we have
$\Phi(x, t) \not \in S$ for $-T_\Psi(x)<t<0$ by definition of $T_\Psi(x)$. This implies $\Phi(P\Psi(x),t) \not \in S$ for $0<t<T_\Psi(x).$ Therefore, $T_\Psi(x) \leq T_\Phi(P\Psi(x)).$ The other relation is obtained by symmetry. Now, we have $(P\Phi)^{-1} = P\Psi$ because
\[
\begin{aligned}
	P\Phi(P\Psi(x)) &= \Phi(P\Psi(x), T_\Phi(P\Psi(x))) = \Phi(P\Psi(x), T_\Psi(x)) =x\\
	P\Psi(P\Phi(x)) &= \Psi(P\Phi(x), T_\Psi(P\Phi(x))) = \Psi(P\Phi(x), T_\Phi(x)) =x
\end{aligned}
\]
for all $x \in S.$ 
	\end{proof}
\begin{corollary}\label{div_sum}
Let $(\Phi, X)$ be a flow with a global Poincar\'e section $S.$ If $x \in S,$ we have
\[
	\begin{aligned}
		\sum_{n=0}^\infty T_\Phi \circ (P\Phi)^n(x) & = \infty \\
		\sum_{n=1}^{\infty} T_\Phi \circ (P\Phi)^{-n}(x) & = \infty.
	\end{aligned}
\]
\end{corollary}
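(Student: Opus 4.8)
The plan is to argue by contradiction and reduce the statement to Lemma \ref{no_freq_seq}, which forbids sequences returning to $S$ arbitrarily quickly. Suppose the first series converges, say
\[
	\sum_{n=0}^\infty T_\Phi \circ (P\Phi)^n(x) = T < \infty.
\]
Write $x_n := (P\Phi)^n(x) \in S$ for the successive return points and $t_n := T_\Phi(x_n) > 0$ for the return times, so that $x_{n+1} = \Phi(x_n, t_n)$ and, setting $s_n := \sum_{k=0}^{n-1} t_k$, we have $x_n = \Phi(x, s_n)$ with $s_n$ increasing to $T$. Since the series converges, its terms satisfy $t_n \to 0$; this is the feature that will clash with Lemma \ref{no_freq_seq}.

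First I would establish that the return points accumulate at a point of $S$ and then invoke Lemma \ref{no_freq_seq}. By continuity of $\Phi$ together with $s_n \to T$, we have $x_n = \Phi(x, s_n) \to \Phi(x, T) =: y$. Each $x_n$ lies in $\Phi(x, [0, T]) \cap S$, which is compact in $S$ by condition (3) of Definition \ref{Top_trans}; hence some subsequence converges in $S$, and since $X$ is Hausdorff the limit must equal $y$, so that $y \in S$. Now $y \in S$, the sequence $x_n \in S$ converges to $y$, the times $t_n > 0$ tend to $0$, and $\Phi(x_n, t_n) = x_{n+1} \in S$; this is precisely the configuration excluded by Lemma \ref{no_freq_seq}, giving the desired contradiction and hence the divergence of the first series.

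For the second series I would pass to the time-reversed flow $\Psi(x,t) := \Phi(x, -t)$, which admits $S$ as a global Poincar\'e section as well. By Theorem \ref{P_thm} we have $(P\Phi)^{-1} = P\Psi$ and $T_\Psi = T_\Phi \circ P\Psi$, so that for $n \geq 1$,
\[
	T_\Phi \circ (P\Phi)^{-n}(x) = T_\Phi \circ (P\Psi)^n(x) = T_\Psi \circ (P\Psi)^{n-1}(x).
\]
Re-indexing converts the second series into $\sum_{m=0}^\infty T_\Psi \circ (P\Psi)^m(x)$, which diverges by the case already proved applied to $\Psi$.

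I expect the main obstacle to be the second step: verifying that the accumulation point $y$ genuinely lies in $S$, since $S$ is merely a submanifold and need not be closed in $X$, so convergence of $x_n$ in $X$ does not by itself place the limit in $S$. Condition (3) of topological transversality is exactly what supplies the required compactness, and recognizing it as the relevant hypothesis is the crux of the argument; without it the application of Lemma \ref{no_freq_seq} would be unjustified.
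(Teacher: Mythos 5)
Your proposal is correct and follows essentially the same route as the paper's own proof: contradiction via convergence of the partial sums, compactness of $\Phi(x,[0,T])\cap S$ from condition (3) of topological transversality to place the limit point in $S$, and then Lemma \ref{no_freq_seq} applied to the sequences $(x_n, t_n)$; the second series is likewise reduced to the first via the time-reversed flow and $T_\Psi = T_\Phi \circ (P\Phi)^{-1}$. Your explicit subsequence/Hausdorff remark is a slightly more careful justification of the step the paper states directly, but the argument is the same.
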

\begin{proof}
As we have $T_\Psi  = T_\Phi \circ P\Psi =T_\Phi \circ (P\Phi)^{-1}$, it is sufficient to prove  the first formula. Suppose the series is convergent and let the sum $T_\infty$ and $x_\infty:= \Phi(x, T_\infty).$ Because we have
\[
	\Phi\left(x, \sum_{n=0}^{N-1} T_\Phi \circ (P\Phi)^n(x)\right) = (P\Phi)^{N} (x)
\]
for each $N \geq 1,$ $x_n := (P \Phi)^n(x)$ converges to $x_\infty.$ For all $n,$ $x_n$ is contained in $\Phi(x,[0, T_\infty])\cap S,$ which is compact by the definition of topological transversality. Therefore, $x_\infty \in S.$ If we set $t_n := T_\Phi \circ (P\Phi)^n (x),$ the convergence of the sum implies $t_n \to 0$ and $\Phi(x_n, t_n) \in S$ for all $n$ by definition. Thus, we have a pair of sequences $(x_n, t_n),$ which does not exist by Lemma \ref{no_freq_seq}. This is a contradiction.
\end{proof}
A flow may admit many different global Poincar\'e sections, and consequently, a pair of a flow and a section may not necessarily be preserved under a weak morphism. If the sections are preserved by a weak morphism as sets, we have the following correspondence of the first return times between two flows.
\begin{lemma}\label{lem_period}
Let $(\Phi, X)$ and $(\Psi, Y)$ be flows with global Poincar\'e sections $S$ and $S',$ respectively, and $(h, \tau): (\Phi, X) \to (\Psi, Y)$ be a weak morphism. Then,
\begin{enumerate}
	\item If $h(S) \subset S',$ $T_\Psi(h(x)) \leq \tau(x, T_\Phi(x))$ for all $x \in S.$
	\item If $h^{-1} (S') \subset S,$ $\tau(x, T_\Phi(x)) \leq T_\Psi(h(x))$ for all $x \in h^{-1} (S').$
\end{enumerate}
In particular, if $S = h^{-1} (S'),$ we have $T_\Psi(h(x)) = \tau(x, T_\Phi(x))$ for all $x \in S.$
\end{lemma}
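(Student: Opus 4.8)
The plan is to exploit the defining identity $h(\Phi(x,t)) = \Psi(h(x), \tau(x,t))$ of a weak morphism together with the characterization of the first return time established in Theorem \ref{P_thm}: namely that $\Phi(x, T_\Phi(x)) \in S$, that $\Phi(x,t) \notin S$ for $0 < t < T_\Phi(x)$, and analogously for $T_\Psi$ and $S'$. The monotonicity of $\tau(x,-)$, an increasing homeomorphism fixing $0$, will be used repeatedly to transport strict time inequalities between the two flows; in particular it guarantees $\tau(x,t) > 0$ whenever $t > 0$.

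For part (1), I would fix $x \in S$ and apply $h$ to the identity $P\Phi(x) = \Phi(x, T_\Phi(x))$. The weak-morphism identity gives $h(P\Phi(x)) = \Psi(h(x), \tau(x, T_\Phi(x)))$. Since $P\Phi(x) \in S$ and $h(S) \subset S'$, the left-hand side lies in $S'$, so $\Psi(h(x), \tau(x, T_\Phi(x))) \in S'$. As $T_\Phi(x) > 0$ forces $\tau(x, T_\Phi(x)) > 0$, this exhibits a strictly positive return of the $\Psi$-orbit of $h(x)$ to $S'$; minimality of $T_\Psi(h(x))$ then yields $T_\Psi(h(x)) \leq \tau(x, T_\Phi(x))$.

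For part (2), I would argue by contradiction. Fix $x \in h^{-1}(S')$ and suppose $\tau(x, T_\Phi(x)) > T_\Psi(h(x))$. Write $s := T_\Psi(h(x)) > 0$ and let $t := \tau(x,-)^{-1}(s)$, which is well defined because $\tau(x,-)$ is a homeomorphism of $\mathbb{R}$; monotonicity together with $s < \tau(x, T_\Phi(x))$ gives $0 < t < T_\Phi(x)$. Applying the weak-morphism identity, $h(\Phi(x,t)) = \Psi(h(x), s) \in S'$, so $\Phi(x,t) \in h^{-1}(S') \subset S$ with $0 < t < T_\Phi(x)$. This contradicts the property $\Phi(x,t) \notin S$ for $0 < t < T_\Phi(x)$ from Theorem \ref{P_thm}, establishing $\tau(x, T_\Phi(x)) \leq T_\Psi(h(x))$.

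Finally, the ``in particular'' clause follows by combining the two parts: when $S = h^{-1}(S')$, the inclusion $h(S) = h(h^{-1}(S')) \subset S'$ holds automatically, so both hypotheses are met and the two inequalities collapse to the asserted equality. I do not anticipate a genuine obstacle here; the only point requiring care is the systematic use of the monotonicity and invertibility of $\tau(x,-)$ to pass between the time parameters of the two flows, which is exactly what makes the weak-morphism framework, rather than the stricter notion of morphism, the correct setting for this correspondence.
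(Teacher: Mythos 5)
Your proposal is correct and follows essentially the same route as the paper: part (1) pushes the return point $\Phi(x,T_\Phi(x))$ forward through the weak-morphism identity, and part (2) pulls the time $T_\Psi(h(x))$ back via $\tau(x,-)^{-1}$ and invokes minimality of $T_\Phi(x)$ (the paper phrases this step directly rather than by contradiction, but the content is identical). The only cosmetic difference is that the paper explicitly records $h(x)\in S'$ before invoking minimality of $T_\Psi(h(x))$ in part (1), which you leave implicit.
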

\begin{proof}
(1) Let $x \in S.$ As we have $\Phi(x, T_\Phi(x)) \in S,$
\[
	\Psi(h(x), \tau(x, T_\Phi(x))) = h(\Phi(x, T_\Phi(x))) \in h(S) \subset S'.
\]
Because $h(x) \in S',$ it follows that $T_\Psi(h(x)) \leq \tau(x, T_\Phi(x)).$

(2) Let $x \in h^{-1} (S').$  We have $\Psi(h(x), T_\Psi(h(x))) \in S'$ and $T_\Psi(h(x)) = \tau(x, t_x)$ for some $t_x \in \mathbb{R}$ because $\tau(x, -)$ is a homeomorphism. Therefore, we have  
\[
	\Psi(h(x), T_\Psi(h(x))) = h(\Phi(x, t_x)) \in  S',
\]
which implies $\Phi(x, t_x) \in h^{-1} (S') \subset S.$ Thus, we obtain $T_\Phi(x) \leq t_x.$ Because $\tau(x, -)$ is monotonically increasing, we conclude that
\[
	\tau(x, T_\Phi(x)) \leq \tau(x, t_x) = T_\Psi(h(x)),
\]
which is the desired property.
\end{proof}
As a consequence of this lemma, we have the following result.
\begin{lemma}\label{lem_func}
Let $(\Phi, X)$ and $(\Psi, Y)$ be flows with global Poincar\'e sections $S$ and $S',$ respectively, and $(h, \tau): (\Phi, X) \to (\Psi, Y)$ be a weak morphism such that $h^{-1} S' = S.$ Then, we have a morphism of map dynamical systems $h|_S: (P\Phi,S) \to (P\Psi,S'),$ where $h|_S : S \to S'$ is the restriction of $h$ to $S.$ 
\end{lemma}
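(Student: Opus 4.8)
The plan is to verify the three conditions that make $h|_S$ a morphism of map dynamical systems: that it is a well-defined map $S \to S'$, that it is continuous, and that it intertwines the two first-return maps, i.e.\ $h|_S \circ P\Phi = P\Psi \circ h|_S$.

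First I would dispose of well-definedness and continuity, both of which are immediate. Since $h^{-1}S' = S$, every $x \in S$ satisfies $h(x) \in S'$, so $h(S) \subseteq S'$ and $h|_S$ indeed takes values in $S'$. Continuity of $h|_S$ is inherited from that of $h$ by restriction, with $S$ and $S'$ carrying the subspace topology. I would also note that $P\Phi$ maps $S$ into $S$ by Theorem \ref{P_thm}, so the composite $h|_S \circ P\Phi$ is defined and there is no domain issue in the intertwining relation.

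The core of the argument is that intertwining relation, and here the engine is Lemma \ref{lem_period}. Fixing $x \in S$, the definition of the Poincar\'e map gives $P\Phi(x) = \Phi(x, T_\Phi(x))$, so applying the weak-morphism identity with $t = T_\Phi(x)$ yields
\[
	h\big(P\Phi(x)\big) = h\big(\Phi(x, T_\Phi(x))\big) = \Psi\big(h(x), \tau(x, T_\Phi(x))\big).
\]
Because $S = h^{-1}S'$, the ``in particular'' clause of Lemma \ref{lem_period} gives $\tau(x, T_\Phi(x)) = T_\Psi(h(x))$, so the right-hand side equals $\Psi(h(x), T_\Psi(h(x))) = P\Psi(h(x))$. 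Hence $h|_S(P\Phi(x)) = P\Psi(h|_S(x))$ for every $x \in S$, which is exactly the required commutativity $h|_S \circ P\Phi = P\Psi \circ h|_S$.

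I do not expect a genuine obstacle here, as the statement is essentially a bookkeeping consequence of what has already been established. The one point that must not be glossed over is the use of the \emph{full} equality $\tau(x, T_\Phi(x)) = T_\Psi(h(x))$ rather than one of the two one-sided inequalities of Lemma \ref{lem_period}. This is precisely why the hypothesis $h^{-1}S' = S$ is assumed rather than merely $h(S) \subseteq S'$: a one-sided containment delivers only $T_\Psi(h(x)) \leq \tau(x, T_\Phi(x))$ (or its reverse), which would not force $\Psi(h(x), \tau(x, T_\Phi(x)))$ to be the \emph{first} return $P\Psi(h(x))$, and the conjugacy relation could fail.
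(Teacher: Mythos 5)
Your proposal is correct and follows essentially the same route as the paper: both reduce the intertwining relation $h|_S \circ P\Phi = P\Psi \circ h|_S$ to the weak-morphism identity combined with the equality $\tau(x, T_\Phi(x)) = T_\Psi(h(x))$ from Lemma \ref{lem_period}. Your added remark on why the two-sided hypothesis $h^{-1}S' = S$ is needed (rather than mere containment $h(S)\subseteq S'$) is a useful clarification but not a different argument.
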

\begin{proof}
As $P\Phi: S \to S$ and $P\Psi: S' \to S'$ are homeomorphisms, they define map dynamical systems.
For each $x \in S,$ we have 
\[
\begin{aligned}
	h\circ P\Phi(x)&= h(\Phi(x, T_\Phi(x))) \\
				&= \Psi(h(x), \tau(x, T_\Phi(x)))\\
				&= \Psi(h(x), T_\Psi(h(x))) = P \Psi \circ h(x). 
	\end{aligned}
\]
Therefore, $h|_S: (P\Phi,S) \to (P\Psi,S')$ is a morphism of map dynamical systems.
\end{proof}
Thus, we may define the following:
\begin{definition}
Let $(\Phi, X)$ and $(\Psi, Y)$ be flows with global Poincar\'e sections $S$ and $S',$ respectively. A morphism $(h, \tau): (\Phi, X)\to(\Psi, Y)$ in $\bf{WFlow}$ is said to \emph{preserve} the global Poincar\'e sections if $S = h^{-1} (S').$

The \emph{category of flows with global Poincar\'e sections} $\bf{FlowGS}$ is the category whose objects are flows with global Poincar\'e sections and whose morphisms are morphisms in $\bf{Flow}$, which preserves the global Poincar\'e sections. 

Similarly, we may define a category $\bf{WFlowGS}$ whose objects are flows with global Poincar\'e sections and whose morphisms are morphisms in $\bf{WFlow}$, which preserves the global Poincar\'e sections. 

Objects in $\bf{WFlowGS}$ or $\bf{FlowGS}$ are denoted by a triple of the form $(\Phi, X, S),$ where $(\Phi, X)$ is a flow with a global Poincar\'e section $S.$
\end{definition}
From Lemma \ref{lem_func}, we immediately obtain the following: 
\begin{theorem}
The construction of a Poincar\'e map is functorial for $\bf{WFlowGS}.$ That is, there exists a functor $P: \bf{WFlowGS} \to \bf{Map}$ defined by setting
\begin{itemize}
	\item $P(\Phi, X, S) = (P \Phi, S)$ for each object $(\Phi, X, S)$ in $\bf{WFlowGS}.$
	\item For each morphism $h:(\Phi_1, X_1, S_1) \to (\Phi_2, X_2, S_2)$ $P(h) = h|_{S_1}: (P \Phi_1, S_1) \to (P \Phi_2, S_2).$
\end{itemize}
\end{theorem}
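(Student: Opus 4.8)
The plan is to verify directly that the proposed assignment $P$ satisfies the two axioms of a functor: preservation of identities and preservation of composition. The heavy lifting has already been done in Lemma \ref{lem_func}, which guarantees that $P$ is well-defined on morphisms — namely, that for a section-preserving morphism $h:(\Phi_1,X_1,S_1)\to(\Phi_2,X_2,S_2)$ the restriction $h|_{S_1}:S_1\to S_2$ is indeed a morphism of map dynamical systems $(P\Phi_1,S_1)\to(P\Phi_2,S_2)$. So the remaining work is purely formal bookkeeping about restrictions of maps.

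First I would check that $P$ sends each object to a legitimate object of $\mathbf{Map}$. By Theorem \ref{P_thm}, when $S$ is a global Poincar\'e section the Poincar\'e map $P\Phi:S\to S$ is a homeomorphism, so $(P\Phi,S)$ is a genuine map dynamical system; this is what the first bullet asserts. Next I would verify the identity axiom: the identity morphism on $(\Phi,X,S)$ in $\mathbf{WFlowGS}$ is the weak morphism $(\mathrm{id}_X,\mathrm{id})$, and its restriction to $S$ is $\mathrm{id}_X|_S = \mathrm{id}_S$, which is exactly the identity morphism on $(P\Phi,S)$ in $\mathbf{Map}$. Hence $P(\mathrm{id}) = \mathrm{id}$.

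The one substantive point is the compatibility of $P$ with composition, and here I would be slightly careful about how morphisms in $\mathbf{WFlowGS}$ compose. Given section-preserving weak morphisms $(h_1,\tau_1):(\Phi_1,X_1,S_1)\to(\Phi_2,X_2,S_2)$ and $(h_2,\tau_2):(\Phi_2,X_2,S_2)\to(\Phi_3,X_3,S_3)$, their composite has space-part $h_2\circ h_1$. I must first confirm the composite is itself section-preserving, i.e.\ $(h_2\circ h_1)^{-1}(S_3) = S_1$; this follows from $(h_2\circ h_1)^{-1}(S_3) = h_1^{-1}(h_2^{-1}(S_3)) = h_1^{-1}(S_2) = S_1$, using the hypotheses on the two factors. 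Then the functoriality on morphisms reduces to the elementary identity of restrictions
\[
	(h_2\circ h_1)|_{S_1} = (h_2|_{S_2})\circ(h_1|_{S_1}),
\]
which holds because $h_1$ maps $S_1$ into $S_2$ (as $h_1(S_1)\subset S_2$, a consequence of $S_1 = h_1^{-1}(S_2)$), so the intermediate values of $h_1$ on $S_1$ lie in the domain $S_2$ of $h_2|_{S_2}$. This gives $P(h_2\circ h_1) = P(h_2)\circ P(h_1)$.

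I expect the main (and only mild) obstacle to be the observation that $h_1(S_1)\subseteq S_2$, needed so that the composite of restrictions is even defined; this inclusion is immediate from the defining condition $S_1 = h_1^{-1}(S_2)$, since $x\in S_1$ forces $h_1(x)\in S_2$. Everything else is the standard verification that restriction of maps respects composition and identities, so the functoriality follows at once. The same argument applies verbatim to the subcategory $\mathbf{FlowGS}$, where $\tau = \mathrm{id}$ throughout.
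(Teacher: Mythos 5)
Your proposal is correct and follows the same route as the paper, which simply derives the theorem as an immediate consequence of Lemma \ref{lem_func}; you supply the routine verifications (objects land in $\mathbf{Map}$ via Theorem \ref{P_thm}, identities and composites of section-preserving morphisms restrict correctly) that the paper leaves implicit. No gaps.
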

\begin{corollary}
The construction of a Poincar\'e map is functorial for $\bf{FlowGS}.$ 
\end{corollary}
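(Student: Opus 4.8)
The plan is to obtain the corollary as an immediate consequence of the preceding theorem establishing functoriality of the Poincar\'e map construction for $\mathbf{WFlowGS}$. The key observation is that $\mathbf{FlowGS}$ sits inside $\mathbf{WFlowGS}$ as a subcategory: both categories have exactly the same objects, namely flows with global Poincar\'e sections $(\Phi, X, S)$, and a morphism in $\mathbf{FlowGS}$ is a morphism in $\mathbf{Flow}$ preserving sections, which under the inclusion $\mathbf{Flow} \hookrightarrow \mathbf{WFlow}$ described in Section~\ref{sec_cat} corresponds to the weak morphism $(h, \mathrm{id})$ that likewise preserves sections. Thus there is an inclusion functor $\iota: \mathbf{FlowGS} \to \mathbf{WFlowGS}$.

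First I would observe that every morphism $h: (\Phi_1, X_1, S_1) \to (\Phi_2, X_2, S_2)$ in $\mathbf{FlowGS}$ is in particular a morphism in $\mathbf{WFlow}$ with time-part $\tau(x,t) = t$, and satisfies the section-preservation condition $S_1 = h^{-1}(S_2)$ by definition of $\mathbf{FlowGS}$. Hence the hypotheses of Lemma~\ref{lem_func} are met, so $h|_{S_1}: (P\Phi_1, S_1) \to (P\Phi_2, S_2)$ is a well-defined morphism in $\mathbf{Map}$. Then I would define $P: \mathbf{FlowGS} \to \mathbf{Map}$ simply as the restriction $P \circ \iota$ of the functor from the previous theorem, acting as $(\Phi, X, S) \mapsto (P\Phi, S)$ on objects and $h \mapsto h|_S$ on morphisms.

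Finally, functoriality itself, namely preservation of identities and compositions, transfers verbatim from the $\mathbf{WFlowGS}$ case, since the identity morphism and the composite of two morphisms in $\mathbf{FlowGS}$ coincide with their images under $\iota$ in $\mathbf{WFlowGS}$ (the time-parts all being the identity, which composes trivially). There is essentially no obstacle here: the entire content was already absorbed into the functoriality theorem for $\mathbf{WFlowGS}$, and the corollary follows by restricting the domain to the subcategory. The only point requiring a line of verification is that a $\mathbf{Flow}$-morphism preserving sections is genuinely a $\mathbf{WFlowGS}$-morphism, which is precisely the compatibility of the inclusion $\mathbf{Flow} \hookrightarrow \mathbf{WFlow}$ with the section-preservation condition.
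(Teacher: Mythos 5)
Your proposal is correct and takes exactly the paper's approach: the paper's proof is precisely to compose the functor $P: \mathbf{WFlowGS} \to \mathbf{Map}$ with the inclusion functor $\mathbf{FlowGS} \hookrightarrow \mathbf{WFlowGS}$. Your additional verification that a section-preserving $\mathbf{Flow}$-morphism becomes a section-preserving weak morphism with identity time-part is a harmless elaboration of the same argument.
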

\begin{proof}
Take the composition of $P : \bf{WFlowGS} \to \bf{Map}$ with the inclusion functor $\bf{FlowGS}\hookrightarrow \bf{WFlowGS}.$ 
\end{proof}

\section{Poincar\'e maps and suspension flows}\label{sec_ps}
In this section, we consider the categorical relationship between a Poincar\'e map and a suspension. 

To establish the notation, we recall the definition of a suspension flow.
\begin{definition}
Let $f:X \to X$ be a homeomorphism on a topological manifold $X$. The \emph{mapping torus} $X_f$ of $f$ is the manifold defined by
\[
	X_f := X \times \mathbb[0,1]/ \sim, 
\]
where $\sim$ is the smallest equivalence relation with $(x, 1) \sim (f(x), 0)$ for each $x \in X.$ There is a natural surjection $\pi_f: X \times \mathbb[0,1] \to X_f,$ which sends each point to the corresponding equivalence class. 
We denote a point in $X_f$ by $[x,t],$ where  $x \in X,$ $0\leq t <1.$ 
\end{definition}
\begin{definition}
Let $(f, X)$ be a map dynamical system. The \emph{suspension flow} $\Sigma f: X_f \times \mathbb{R} \to X_f$ of $(f, X)$ is defined by
\[
	\Sigma f([x,t], s) := [f^n (x), s+t -n],
\]
where $x \in X,$ $0\leq t <1$ and $n \in \mathbb{Z}$ is a unique integer satisfying $s+t -1 < n \leq s+t.$
\end{definition}
\begin{theorem}
The construction of a suspension flow is functorial. That is, there exists a functor $\Sigma: \bf{Map} \to \bf{FlowGS}$ defined by setting
\begin{itemize}
	\item $\Sigma(f, X) = (\Sigma f, X_f,  (X_f)_0 )$ for each object $(f, X)$ in $\bf{Map}.$
	\item For each morphism $h:(f, X) \to (g, Y),$ we set
	\[\Sigma(h) = \bar h: (\Sigma f, X_f,  (X_f)_0 ) \to (\Sigma g, Y_g,  (Y_g)_0 ),\]
	where $\bar h([x,t]) = [h(x), t]$ and $(X_f)_0 = \{[x,0] \mid x \in X\}$ and $(Y_g)_0 = \{[y,0] \mid y \in Y\}.$
\end{itemize}
\end{theorem}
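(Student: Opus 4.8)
The plan is to verify the three requirements for $\Sigma$ to be a functor into $\mathbf{FlowGS}$: that it sends each object of $\mathbf{Map}$ to a legitimate object, each morphism to a legitimate morphism preserving the sections, and that it respects identities and composition.

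For the object assignment, I would first recall that the mapping torus $X_f$ is an $X$-bundle over $S^1$, hence a topological manifold without boundary of dimension $\dim X + 1$. The cleanest way to handle $\Sigma f$ is to identify $X_f$ with the quotient of $X \times \mathbb{R}$ by the $\mathbb{Z}$-action $n \cdot (x,t) = (f^{-n}(x), t+n)$ and to observe that the translation flow $(x,t) \mapsto (x, t+s)$ on $X \times \mathbb{R}$ commutes with this action, so it descends to a continuous flow on the quotient; unwinding the identification recovers exactly the formula for $\Sigma f$. This gives continuity (including across the gluing seam $t \to 1$) and the two flow axioms for free, since both hold for the translation flow upstairs. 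I would then check that $(X_f)_0$ is a global Poincar\'e section. The key observation is that the first return time of $\Sigma f$ to $(X_f)_0$ is identically $1$, so in a flow box built from a chart $V$ of $X$ about $x$ the flow is conjugate to the trivial translation flow on $V \times \mathbb{R}$ with section $V \times \{0\}$. From this the local flatness and the flow-box condition (with $\delta_\pm$ taken locally uniform, say $\pm 1/2$) follow at once, and any orbit segment meets $(X_f)_0$ in a finite, hence compact, set, giving condition (3) of Definition \ref{Top_trans}. The global return condition is the explicit computation that flowing $[x,t]$ forward by $1-t$ lands in $(X_f)_0$, and flowing backward by $-t$ (or by $-1$ when $t=0$) lands in it as well.

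For the morphism assignment, given $h$ with $h \circ f = g \circ h$, I would define $\bar h$ as the factorization through $\pi_f$ of the continuous map $\pi_g \circ (h \times \mathrm{id}) : X \times [0,1] \to Y_g$. Well-definedness amounts to checking that the two identified points $(x,1)$ and $(f(x),0)$ have the same image, and this is precisely where the intertwining relation $h f = g h$ enters: both map to $[g(h(x)),0]$. Continuity is then automatic from the universal property of the quotient map $\pi_f$. That $\bar h$ is a morphism of flows is a one-line comparison of $\bar h(\Sigma f([x,t],s)) = [g^n(h(x)), s+t-n]$ with $\Sigma g(\bar h([x,t]), s)$, using $h f^n = g^n h$ and the fact that the integer $n$ depends only on $s$ and $t$. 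Finally $\bar h$ preserves the sections: since $\bar h([x,t]) = [h(x),t]$ and the canonical representative of a class has second coordinate in $[0,1)$, membership in $(Y_g)_0$ forces $t=0$, so $\bar h^{-1}((Y_g)_0) = (X_f)_0$ exactly, and $\bar h$ is a morphism in $\mathbf{FlowGS}$.

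Functoriality is then bookkeeping: $\overline{\mathrm{id}_X}([x,t]) = [x,t]$ and $\overline{h_2 \circ h_1}([x,t]) = [h_2 h_1(x), t] = \bar h_2(\bar h_1([x,t]))$, so $\Sigma$ preserves identities and composition. I expect the main obstacle to be verifying that $(X_f)_0$ satisfies all the conditions of topological transversality in Definition \ref{Top_trans}; everything else reduces to the quotient universal property and the single algebraic identity $h f = g h$. Fortunately the constancy of the return time collapses the transversality check to the trivial translation model, so the difficulty there is organizational rather than substantive.
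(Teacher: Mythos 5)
Your proposal is correct, and for the part of the statement that the paper actually proves --- well-definedness and continuity of $\bar h$ via the universal property of the quotient, the intertwining computation using $h\circ f^n = g^n\circ h$, the section-preservation argument forcing $t=0$, and the bookkeeping for identities and composition --- you follow essentially the same route as the paper. The genuine difference is that you also verify the object assignment: the paper simply asserts $\Sigma(f,X) = (\Sigma f, X_f, (X_f)_0)$ and never checks that $X_f$ is a manifold, that $\Sigma f$ is a continuous flow, or that $(X_f)_0$ satisfies the conditions of Definition \ref{Top_trans} and is a global Poincar\'e section. Your device for this --- realizing $X_f$ as the quotient of $X\times\mathbb{R}$ by the $\mathbb{Z}$-action $n\cdot(x,t)=(f^{-n}(x),t+n)$ and descending the translation flow --- is a clean way to get continuity of $\Sigma f$ across the gluing seam without casework on the integer $n$, and the observation that the return time to $(X_f)_0$ is identically $1$ does reduce the transversality check to the trivial product model. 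So your proof is, if anything, more complete than the paper's; the paper buys brevity by treating the suspension construction as standard, while you buy self-containedness at the cost of the (routine but nonempty) transversality verification.
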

\begin{proof}
Let $h:(f, X) \to (g, Y)$ be a morphism in $\bf{Map}.$ First, we show that $\bar h: X_f \to Y_g$ is well-defined and continuous. Well-definedness is verified by a direct calculation using $g\circ h = h \circ f.$ The continuity follows from the commutativity of the following diagram and the universal property of the quotient topology:
\[
\begin{diagram}
	\node{X\times [0,1]}
	\arrow{e,t}{\pi_f}
	\arrow{s,t}{h\times \rm{id}}
	\node{X_f}
	\arrow{s,t}{\bar h} \\
	\node{Y\times[0,1]}
	\arrow{e,t}{\pi_g}
	\node{Y_g}
\end{diagram}
\]
We show that $\bar h$ commutes with suspension flows. This is verified by a direct calculation:
\[
\begin{aligned}
	\bar h \left( \Sigma f([x,t], s) \right) &= \bar h \left( [f^n (x), s+t -n] \right)\\
								&= [h (f^n (x)), s+t -n]\\
								&= [g^n(h(x)), s+t -n]\\
								&= \Sigma g([h(x),t], s)\\
								& = \Sigma g(\bar h([x,t]), s)
\end{aligned}
\]
where $x \in X,$ $0\leq t <1$ and $n \in \mathbb{Z}$ is a unique integer satisfying $s+t -1 < n \leq s+t.$

We show that $\bar h$ preserves the sections, that is, $\bar h^{-1}  (Y_g)_0 = (X_f)_0.$ By definition, we have $\bar h (X_f)_0 \subset (Y_g)_0,$ so $(X_f)_0 \subset \bar h^{-1}  (Y_g)_0 .$ Conversely, if $[x,t] \in \bar h^{-1}  (Y_g)_0$ with $0\leq t < 1,$ then $t =0$ and therefore, $[x, t] \in (X_f)_0.$

Finally, we show that $\Sigma$ is a functor. It is clear that $\Sigma(1_{(f,X)}) = 1_{(\Sigma f, X_f,  (X_f)_0 ) }.$ 
If $h_1: (f_1, X_1) \to (f_2, X_2) $ and $h_2: (f_2, X_2) \to (f_3, X_3)$ are morphisms in $\bf{Map},$  we have $\overline{(h_2 \circ h_1)} = \bar h_2 \circ \bar h_1.$
\end{proof}
Now, we have three categories and three functors between them:
\begin{enumerate}
	\item Inclusion functor $I : \bf{FlowGS} \to \bf{WFlowGS}$.
	\item Poincar\'e map functor $P : \bf{WFlowGS} \to \bf{Map}$.
	\item Suspension functor $\Sigma : \bf{Map} \to \bf{FlowGS}$.
\end{enumerate}
From the existence of these functors, we immediately recover some known results on the preservation of isomorphisms.
\begin{theorem}
Each of the following statements holds.
\begin{enumerate}
	\item If two flows with global Poincar\'e sections are topologically equivalent, there is a pair of global Poincar\'e sections such that the Poincar\'e maps are topologically conjugate.
	\item If two maps are topologically conjugate, their suspension flows are topologically conjugate.
\end{enumerate}
\end{theorem}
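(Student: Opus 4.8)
The plan is to derive both statements from the single categorical principle that any functor preserves isomorphisms, applied to the three functors $\Sigma$, $P$, and $I$ already constructed.

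Statement (2) is essentially immediate. Two topologically conjugate maps are, by definition, isomorphic objects in $\mathbf{Map}$. Applying the suspension functor $\Sigma \colon \mathbf{Map} \to \mathbf{FlowGS}$ and using that functors send isomorphisms to isomorphisms, I conclude that $\Sigma(f,X)$ and $\Sigma(g,Y)$ are isomorphic in $\mathbf{FlowGS}$. An isomorphism in $\mathbf{FlowGS}$ is in particular an invertible morphism of $\mathbf{Flow}$ whose inverse is again a morphism of $\mathbf{Flow}$ (the section-preservation constraint only cuts down the class of morphisms, it does not affect invertibility inside $\mathbf{Flow}$), so the underlying flows $(\Sigma f, X_f)$ and $(\Sigma g, Y_g)$ are isomorphic in $\mathbf{Flow}$, i.e.\ topologically conjugate. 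No further computation is needed here.

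The subtlety lies entirely in statement (1), because topological equivalence is an isomorphism in $\mathbf{WFlow}$, not in $\mathbf{WFlowGS}$: such an equivalence need not carry the chosen section $S$ onto the chosen section $S'$. My approach is therefore to repair the sections. Given a topological equivalence $(h,\tau)\colon(\Phi,X)\to(\Psi,Y)$ and the section $S'$ of $(\Psi,Y)$, I would set $\tilde S := h^{-1}(S')$ and prove that $(\Phi,X,\tilde S)$ is a legitimate object of $\mathbf{WFlowGS}$. With this choice $(h,\tau)$ automatically preserves sections, since $\tilde S = h^{-1}(S')$ by construction, and it becomes an isomorphism in $\mathbf{WFlowGS}$: its inverse is again section-preserving because $h(\tilde S) = S'$. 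Applying the Poincar\'e map functor $P\colon \mathbf{WFlowGS}\to\mathbf{Map}$ then yields that $P(h)=h|_{\tilde S}\colon (P\Phi,\tilde S)\to(P\Psi,S')$ is an isomorphism in $\mathbf{Map}$, that is, a topological conjugacy of the two Poincar\'e maps, and $(\tilde S, S')$ is the desired pair of sections.

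The main obstacle is verifying that $\tilde S = h^{-1}(S')$ is indeed a global Poincar\'e section of $(\Phi,X)$, which amounts to transporting the conditions of Definition \ref{Top_trans} and of a global section backward along $h$. Here I can use that $h$ is a homeomorphism and that, because each $\tau(x,-)$ is an increasing homeomorphism fixing $0$, $h$ sends every $\Phi$-orbit onto a $\Psi$-orbit preserving its orientation. Thus codimension one and local flatness pass through $h^{-1}$ directly, the flow-box neighborhoods of Definition \ref{Top_trans}(2) pull back to flow-box neighborhoods with the two local sides of $S'$ pulling back to the two local sides of $\tilde S$ (using monotonicity of $\tau$ to preserve the sign of the time parameter), and condition (3) together with the forward and backward hitting times required of a global section transfer since $h$ is a bijection carrying orbit segments to orbit segments. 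Once this transversality-transfer lemma is in place, the categorical argument above closes the proof; I expect the orientation bookkeeping in Definition \ref{Top_trans}(2) to be the only genuinely delicate point.
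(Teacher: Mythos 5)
Your argument is correct and is essentially the route the paper intends: the paper states this theorem with no proof beyond the preceding remark that it follows ``immediately'' from the existence of the functors, i.e.\ from the fact that functors preserve isomorphisms, which is exactly your reduction for both (1) and (2). Your extra step for (1) --- replacing the given section by $\tilde S = h^{-1}(S')$ and verifying it is a global Poincar\'e section --- is precisely the content the paper leaves implicit; the one point that needs a bit more care than your ``pulls back directly'' suggests is the locally uniform lower bound on $\delta_{\pm}$ in Definition~\ref{Top_trans}(2), since $\tau$ is not assumed continuous in its first argument, but it can be recovered from the continuity of $\Phi$: if $s_n:=\tau(y_n,\cdot)^{-1}(\delta')\to 0$ along $y_n\to x$ in $\tilde S$, then $\Psi(h(y_n),\delta')=h(\Phi(y_n,s_n))\to h(x)$ forces $\Psi(h(x),\delta')=h(x)$, contradicting $0<\delta'<\delta'_{+}(h(x))$.
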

At this point, we must consider the degree of difference between the original flow and the suspension flow of the Poincar\'e map. First, we note that there is a pair of flows that are topologically equivalent but not topologically conjugate. The following is a modification of an example in \cite{pilyugin2019spaces}.  
\begin{example}
	We define two flows on $A = \{z \in \mathbb{C} \mid 1<|z|< 2\}$ by
		\[
			\begin{aligned}
				\Phi_1(z, t) &:= z e^{i\pi t}\\
				\Phi_2(z, t) &:= z e^{2 \pi it}.
			\end{aligned}
		\]
	Then, they are topologically equivalent but not topologically conjugate.
\end{example}
\begin{proof}
Topological equivalence is obvious.
Suppose there is a homeomorphism $h: A \to A$ such that $\Phi_1$ and $\Phi_2$ are topologically conjugate, that is,
\[
	h\left( z e^{i\pi t}\right)  = h(z) e^{2 \pi i t}
\]	
for all $z \in A$ and $t \in \mathbb{R}.$ By considering $t= 1,$ we obtain $h(z) = h(-z)$ for all $z\in A,$ which contradicts the condition that $h$ is injective.
\end{proof}
Note that we may take $A_0 = \{x \mid 0< x<1\}$ as a global Poincar\'e section for these flows. With this choice, the Poincar\'e map is the identity $\rm{id}_{A_0}$ in either case. 
Further, the suspension flow for $\rm{id}_{A_0}$ coincides with $\Phi_1.$ Thus, the suspension flow of a Poincar\'e map is not necessarily topologically conjugate with the original flow. On the other hand, topological equivalence can be established.

\begin{lemma}\label{lem_adj_1}
There is a natural transformation $(k,\tau) : I \Sigma P I\to I$ defined by the following for each $(\Phi, X, S)$ in $\bf{FlowGS}:$
\[
\begin{aligned}
	k_{(\Phi, X, S)} ([x,t]) &:= \Phi(x, t T_\Phi(x))\\
	\tau_{(\Phi, X, S)} ([x,t],s)&:= \int_0^{s+t } R_{\Phi}(x)(u) d u -tT_\Phi(x),
\end{aligned}
\]
where $x \in S,$ $0\leq t <1$ and  
\[
	R_{\Phi}(x)(u) := \sum_{i \in \mathbb{Z} } T_\Phi( (P\Phi)^i (x)) \chi_{[i, i+1)}(u),
\]
where $\chi_{[i, i+1)}$ is the indicator function of $[i, i+1).$
\end{lemma}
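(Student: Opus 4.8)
The natural transformation must assign to each object $(\Phi, X, S)$ of $\mathbf{FlowGS}$ a morphism of $\mathbf{WFlowGS}$ from $I\Sigma P I(\Phi, X, S) = (\Sigma(P\Phi), S_{P\Phi}, (S_{P\Phi})_0)$ to $I(\Phi, X, S) = (\Phi, X, S)$, and these must be natural in $(\Phi, X, S)$. Accordingly the plan has two parts: first, for a fixed object, to verify that $(k_{(\Phi,X,S)}, \tau_{(\Phi,X,S)})$ is a well-defined weak morphism that preserves the global Poincar\'e sections; second, to verify the naturality square.

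For the first part I would proceed as follows. Well-definedness and continuity of $k$ follow from the universal property of the quotient defining $S_{P\Phi}$: the map $(x,t) \mapsto \Phi(x, t T_\Phi(x))$ on $S \times [0,1]$ is continuous, using continuity of $\Phi$ and of $T_\Phi$ from Theorem \ref{P_thm}, and it respects the identification, since letting $t \to 1$ gives $\Phi(x, T_\Phi(x)) = P\Phi(x) = k_{(\Phi,X,S)}([P\Phi(x),0])$. For $\tau$, the key observation is the shift relation $R_\Phi(P\Phi(x))(u) = R_\Phi(x)(u+1)$, which after a change of variables shows that the two expressions for $\tau$ at $[x,1]$ and at $[P\Phi(x),0]$ agree, so $\tau$ descends to $S_{P\Phi} \times \mathbb{R}$. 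That $\tau_{(\Phi,X,S)}([x,t],-)$ is an increasing homeomorphism fixing $0$ splits into three observations: it is strictly increasing because it is an integral of the everywhere-positive step function $R_\Phi(x)$; it satisfies $\tau_{(\Phi,X,S)}([x,t],0) = 0$ because $R_\Phi(x) \equiv T_\Phi(x)$ on $[0,1)$; and it is surjective onto $\mathbb{R}$ precisely because of the two-sided divergence of the return-time sums established in Corollary \ref{div_sum}.

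The intertwining identity $k_{(\Phi,X,S)}(\Sigma(P\Phi)([x,t],s)) = \Phi(k_{(\Phi,X,S)}([x,t]), \tau_{(\Phi,X,S)}([x,t],s))$ is the computational heart. Writing $s+t = N + \theta$ with $N = \lfloor s+t \rfloor$ and $\theta \in [0,1)$, flow additivity reduces both sides to the claim $\Phi(x, \int_0^{s+t} R_\Phi(x)(u)\,du) = \Phi((P\Phi)^N(x), \theta\, T_\Phi((P\Phi)^N(x)))$. Evaluating the integral of the step function $R_\Phi(x)$ turns this into the telescoping identity $\Phi(x, \sum_{i=0}^{N-1} T_\Phi((P\Phi)^i(x))) = (P\Phi)^N(x)$ of Corollary \ref{div_sum}, together with an analogous backward telescoping for $N < 0$; I expect the negative-$N$ bookkeeping, which relies on the relation $T_\Psi = T_\Phi \circ (P\Phi)^{-1}$ from Theorem \ref{P_thm}, to be the most error-prone step. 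Section preservation is then immediate: $k_{(\Phi,X,S)}([x,0]) = x \in S$, while for $0 < t < 1$ one has $t T_\Phi(x) \in (0, T_\Phi(x))$, on which $\Phi(x, \cdot)$ avoids $S$ by the first-return property in Theorem \ref{P_thm}, so $k_{(\Phi,X,S)}^{-1}(S) = (S_{P\Phi})_0$.

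Finally, for naturality let $h : (\Phi, X, S) \to (\Psi, Y, S')$ be a morphism of $\mathbf{FlowGS}$, so that $I\Sigma P I(h) = (\overline{h|_S}, \mathrm{id})$ and $I(h) = (h, \mathrm{id})$. Computing the two composites of weak morphisms, the naturality square reduces to the identity of map-parts $h \circ k_{(\Phi,X,S)} = k_{(\Psi,Y,S')} \circ \overline{h|_S}$ and the equality of time-parts $\tau_{(\Phi,X,S)}([x,t],s) = \tau_{(\Psi,Y,S')}([h(x),t],s)$. Both follow from two facts already available: $T_\Psi \circ h = T_\Phi$ on $S$, which is the equality case of Lemma \ref{lem_period} applied to the flow morphism $h$ whose time-part is the identity, and $h \circ P\Phi = P\Psi \circ h$ from Lemma \ref{lem_func}. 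Together these give $T_\Psi((P\Psi)^i(h(x))) = T_\Phi((P\Phi)^i(x))$ for all $i$, hence $R_\Psi(h(x)) = R_\Phi(x)$ as functions on $\mathbb{R}$, whence both the map-parts and the time-parts coincide, completing the argument. The genuinely non-formal steps are the surjectivity of $\tau_{(\Phi,X,S)}([x,t],-)$ and the intertwining identity, both of which hinge on Corollary \ref{div_sum}.
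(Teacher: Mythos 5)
Your proposal is correct and follows essentially the same route as the paper's proof: continuity of $k$ via the universal property of the quotient $\pi_{P\Phi}$, monotonicity plus Corollary \ref{div_sum} to make $\tau([x,t],-)$ a homeomorphism, the intertwining identity by direct computation split on the sign of the integer part of $s+t$, and naturality via $T_{\Phi_2}\circ h = T_{\Phi_1}$ and $h\circ P\Phi_1 = P\Phi_2\circ h$. The only differences are that you spell out two points the paper leaves as ``direct calculation'' (the descent of $\tau$ through the identification $[x,1]\sim[P\Phi(x),0]$ and the verification that $k^{-1}(S)=(S_{P\Phi})_0$ via the first-return property), which is consistent with the paper's argument.
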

\begin{proof}
First, we show that $(k,\tau)_{ (\Phi, X, S)}: I \Sigma P I(\Phi, X, S) \to (\Phi, X, S)$ is well-defined as a weak morphism in $\bf{WFlowGS}$. Well-definedness and continuity of $k_{(\Phi, X, S)} $ follow from the commutativity of the following diagram:
\[
\begin{diagram}
	\node{S\times [0,1]}
	\arrow{e,t}{\pi_{P\Phi}}
	\arrow{se,r}{K}
	\node{S_{P\Phi}}
	\arrow{s,r}{k_{ (\Phi, X, S)}}\\
	\node{}
	\node{X}
\end{diagram}
\]
where $K:S\times [0,1] \to X$ is defined by $K(x,t) := \Phi(x, t T_\Phi(x))$ for each $(x,t) \in S\times [0,1].$ 

By definition, we have $\tau_{(\Phi, X, S)} ([x,t],0) = 0.$ Because $R_{\Phi}(x)(-) $ is a positive-valued function, $\tau_{(\Phi, X, S)} ([x,t],-):\mathbb{R} \to \mathbb{R}$ is strictly monotonous. By Corollary \ref{div_sum}, it is also a surjection. Thus, $\tau_{(\Phi, X, S)} ([x,t],-)$ is a homeomorphism. 

We check that $k_{(\Phi, X, S)}$ commutes with the flows by a direct calculation. When $n \geq 0,$ where $n \in \mathbb{Z}$ is a unique integer satisfying $s+t -1 < n \leq s+t,$ we have
\[
\begin{aligned}
	k_{(\Phi, X, S)} \left(\Sigma P \Phi([x,t],s) \right)& = k_{(\Phi, X, S)}\left( [(P \Phi)^n (x) ,s+t-n]\right)\\
								& = \Phi((P \Phi)^n (x), (s+t - n) T_\Phi \circ (P \Phi)^n (x))\\
								& = \Phi\left(x, \sum_{i=0}^{n-1}T_\Phi \circ (P \Phi)^i (x) + (s+t - n) T_\Phi \circ (P \Phi)^n (x)\right)\\
								& = \Phi\left(x, \int_0^{s+t } R_{\Phi}(x)(u) d u\right)\\
								& = \Phi\left(k_{(\Phi, X, S)}([x,t]),\tau_{(\Phi, X, S)} ([x,t],s)\right).
\end{aligned}
\]
Noting that $\Phi((P \Phi)^{-1} (x), t) = \Phi(x, t -T_\Phi \circ (P \Phi)^{-1} (x)),$ we calculate the following for $n \leq -1$:
\[
\begin{aligned}
	k_{(\Phi, X, S)} \left(\Sigma P \Phi([x,t],s) \right)& = k_{(\Phi, X, S)}\left( [(P \Phi)^n (x) ,s+t-n]\right)\\
								& = \Phi((P \Phi)^n (x), (s+t - n) T_\Phi \circ (P \Phi)^n (x))\\
								& = \Phi\left(x, \sum_{i=1}^{-n+1}T_\Phi \circ (P \Phi)^{-i} (x) + (s+t - n-1) T_\Phi \circ (P \Phi)^n (x)\right)\\
								& = \Phi\left(x, \int_0^{s+t } R_{\Phi}(x)(u) d u\right)\\
								& = \Phi\left(k_{(\Phi, X, S)}([x,t]),\tau_{(\Phi, X, S)} ([x,t],s)\right).
\end{aligned}
\]
The condition that $k_{(\Phi, X, S)}^{-1} S = (S_{P\Phi})_0$ can be verified by a direct calculation.

Finally, we show that $(k,\tau)$ is natural. Let $h:(\Phi_1, X_1, S_1) \to (\Phi_2, X_2, S_2)$ be a morphism in $\bf{FlowGS}.$
Then, we have
\[
\begin{aligned}
	h\circ k_{(\Phi_1, X_1, S_1)} ([x,t]) &= h \circ \Phi_1(x, t T_{\Phi_1}(x)) \\
								&= \Phi_2(h(x), t T_{\Phi_2}(x)) \\
								&= k_{(\Phi_2, X_2, S_2)} ([h(x),t]) \\
								&= k_{(\Phi_2, X_2, S_2)} \circ \overline{h|_{S_1}}([x,t]), 
\end{aligned}
\]
where $x \in S_1$ and $0 \leq t <1.$
We also have
\[
\begin{aligned}
	\tau_{(\Phi_1, X_1, S_1)} ([x,t], s) &= \int_0^{s+t } R_{\Phi_1}(x)(u) d u -tT_{\Phi_1}(x) \\
								&= \int_0^{s+t } R_{\Phi_2}(h(x))(u) d u -tT_{\Phi_2}(h(x))\\
								& = \tau_{(\Phi_2, X_2, S_2)} (  \overline{h|_{S_1}}( [x,t]), s),
\end{aligned}
\]
using $T_{\Phi_2}(h(x)) = T_{\Phi_1}(x).$
\end{proof}
\begin{remark}\label{rem_bij}
The map $k_{(\Phi, X, S)} $ is bijective. Surjectivity is obvious. For injectivity, if $\Phi(x, t T_\Phi(x)) = \Phi(x', t' T_\Phi(x'))$ for $x, x' \in S$ and $t,t' \in [0,1),$ we have
\[
	x' = \Phi(x, t T_\Phi(x)-t' T_\Phi(x')) \in S.
\]
If $t T_\Phi(x)-t' T_\Phi(x')>0,$ then it follows that $t T_\Phi(x)-t' T_\Phi(x') \geq T_\Phi(x).$ Because $t <1, $ this is a contradiction. Therefore, $t T_\Phi(x)-t' T_\Phi(x') \leq 0.$ By interchanging $t$ and $x$ with $t'$ and $x'$, we also have $t' T_\Phi(x')-t T_\Phi(x) \leq 0.$ Therefore, we conclude that $x = x'$ and consequently $t= t'.$
\end{remark}
Using the invariance of domain theorem, we observe that $(k,\tau) : I \Sigma P I\to I$ is a natural isomorphism. In ordinary terms, this observation can be phrased as follows. 
\begin{corollary}
If $(\Phi, X, S)$ is a flow with a global Poincar\'e section, then $(\Phi, X, S)$ is topologically equivalent to $\Sigma P (\Phi, X, S).$
\end{corollary}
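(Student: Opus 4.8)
The plan is to show that the natural transformation $(k,\tau): I\Sigma P I \to I$ constructed in Lemma \ref{lem_adj_1} is in fact a natural \emph{iso}morphism; the corollary then follows at once by specializing to the single object $(\Phi, X, S)$, since topological equivalence is by definition isomorphism in $\bf{WFlow}$. To prove that $(k,\tau)$ is a natural isomorphism it suffices to check that each component $(k_{(\Phi, X, S)}, \tau_{(\Phi, X, S)})$ is an isomorphism in $\bf{WFlowGS}$, that is, admits an inverse weak morphism.

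First I would promote $k_{(\Phi, X, S)}$ from a continuous bijection to a homeomorphism. It is continuous by Lemma \ref{lem_adj_1} and bijective by Remark \ref{rem_bij}, so only continuity of its inverse remains to be established. The key observation is that the source $S_{P\Phi}$ and the target $X$ are topological manifolds of the \emph{same} dimension: since $S$ is a codimension-one submanifold of the $n$-manifold $X$, it is $(n-1)$-dimensional, and hence its mapping torus $S_{P\Phi}$ is $n$-dimensional. Reading $k_{(\Phi, X, S)}$ through charts and applying the invariance of domain theorem, a continuous injection between $n$-manifolds is an open map; thus $k_{(\Phi, X, S)}$ is open, and therefore a homeomorphism.

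Once $k_{(\Phi, X, S)}$ is known to be a homeomorphism, I would write down the inverse weak morphism explicitly. The natural candidate is $(k_{(\Phi, X, S)}^{-1}, \sigma)$ with
\[
	\sigma(q, r) := \tau_{(\Phi, X, S)}\bigl(k_{(\Phi, X, S)}^{-1}(q), -\bigr)^{-1}(r),
\]
which is well-defined because $\tau_{(\Phi, X, S)}([x,t], -)$ was already shown in Lemma \ref{lem_adj_1} to be an increasing self-homeomorphism of $\mathbb{R}$ fixing $0$. A direct substitution verifies the defining identity of a weak morphism and shows that the two composites with $(k_{(\Phi, X, S)}, \tau_{(\Phi, X, S)})$ are identity weak morphisms; the properties $\sigma(q, 0) = 0$ and monotonicity of $\sigma(q, -)$ are inherited from those of $\tau_{(\Phi, X, S)}$, while continuity of $\sigma$ follows from the continuity of $\tau_{(\Phi, X, S)}$ and of $k_{(\Phi, X, S)}^{-1}$.

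I expect the homeomorphism step to be the only genuine obstacle. Continuity of the set-theoretic inverse of $k_{(\Phi, X, S)}$ does not follow from purely formal considerations and really does require invariance of domain; the matching of dimensions between $S_{P\Phi}$ and $X$ is exactly the hypothesis that makes the theorem applicable. With openness of $k_{(\Phi, X, S)}$ secured, the construction of the inverse weak morphism is routine, naturality has already been established in Lemma \ref{lem_adj_1}, and the corollary is obtained simply by restricting the resulting natural isomorphism to the object $(\Phi, X, S)$.
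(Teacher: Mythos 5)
Your proposal is correct and follows essentially the same route as the paper: the paper also obtains the corollary by noting that $k_{(\Phi, X, S)}$ is a continuous bijection (Lemma \ref{lem_adj_1} and Remark \ref{rem_bij}) between manifolds of equal dimension and invoking invariance of domain to upgrade $(k,\tau)$ to a natural isomorphism in $\bf{WFlowGS}$. Your explicit construction of the inverse weak morphism via $\sigma(q,r) = \tau_{(\Phi,X,S)}\bigl(k_{(\Phi,X,S)}^{-1}(q), -\bigr)^{-1}(r)$ merely spells out details the paper leaves implicit.
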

Another natural transformation can be constructed.
\begin{lemma}
There is a natural transformation $l : 1_{\bf{Map}} \to P I \Sigma$ defined by
\[
	l_{(f,X)} (x) : = [x,0]
\]
for each $x \in X.$
\end{lemma}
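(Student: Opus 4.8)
The plan is to verify the two requirements for $l$ to be a natural transformation $1_{\bf{Map}} \to PI\Sigma$: first, that each component $l_{(f,X)} : (f,X) \to PI\Sigma(f,X)$ is a morphism in $\bf{Map}$, and second, that the naturality square commutes for every morphism $h:(f,X) \to (g,Y)$. Since the suspension theorem guarantees that $\Sigma(f,X) = (\Sigma f, X_f, (X_f)_0)$ is an object of $\bf{FlowGS}$, the section $(X_f)_0$ is global and $P\Sigma f$ is defined on all of $(X_f)_0$, so these are the only things to check.

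The crux is to identify $PI\Sigma(f,X)$ explicitly, which amounts to computing the Poincar\'e map of $\Sigma f$ on its canonical section $(X_f)_0 = \{[x,0] \mid x \in X\}$. I would do this directly from the definition of $\Sigma f$. For $[x,0] \in (X_f)_0$ and $0 < s < 1$, the defining integer is $n = 0$, so $\Sigma f([x,0], s) = [x, s] \notin (X_f)_0$; while for $s = 1$ we have $n = 1$ and $\Sigma f([x,0], 1) = [f(x), 0] \in (X_f)_0$. Hence the first-return time is constant, $T_{\Sigma f} \equiv 1$ on $(X_f)_0$, and $P\Sigma f([x,0]) = [f(x),0]$. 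This return-map computation is really the only computational content of the statement.

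Granting this, the intertwining property of $l_{(f,X)}$ is immediate, since $P\Sigma f \circ l_{(f,X)}(x) = P\Sigma f([x,0]) = [f(x),0] = l_{(f,X)}(f(x)) = l_{(f,X)} \circ f(x)$. Continuity of $l_{(f,X)} : X \to (X_f)_0$ follows because it factors as $x \mapsto (x,0) \mapsto \pi_f(x,0)$ through the continuous quotient map $\pi_f$. Thus each $l_{(f,X)}$ is a morphism in $\bf{Map}$. For naturality, I would first unwind $PI\Sigma(h)$: since $\Sigma(h) = \bar h$ with $\bar h([x,t]) = [h(x),t]$, the inclusion $I$ sends this to the weak morphism $(\bar h, \mathrm{id})$, and $P$ restricts it to the section, so $PI\Sigma(h) = \bar h|_{(X_f)_0}$ and $PI\Sigma(h)([x,0]) = [h(x),0]$. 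Then both composites in the square agree: $PI\Sigma(h) \circ l_{(f,X)}(x) = [h(x),0] = l_{(g,Y)}(h(x)) = l_{(g,Y)} \circ h(x)$.

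I do not anticipate a genuine obstacle; the entire statement reduces to the return-map computation in the second paragraph, after which the morphism condition and naturality are both one-line diagram chases. The only point requiring a little care is confirming that $s=1$, and not some smaller value, is the \emph{first} return, which is exactly what the $0<s<1$ case above rules out.
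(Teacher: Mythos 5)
Your proposal is correct and follows essentially the same route as the paper: verify each component is a morphism in $\bf{Map}$ and check the naturality square. The only difference is that you spell out the return-time computation $T_{\Sigma f}\equiv 1$ on $(X_f)_0$, which the paper leaves implicit in the single step $\Sigma f([x,0],1)=PI\Sigma f([x,0])$; this is a reasonable bit of added care, not a different argument.
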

\begin{proof}
First, we show that $l_{(f,X)} : (f,X) \to P I \Sigma (f,X)$ is well-defined as a morphism in $\bf{Map}.$ As $l_{(f,X)}$ is a composition of continuous maps, it is well-defined and continuous. Additionally, we have
\[
	l_{(f,X)} \circ f(x) = [f(x), 0] = \Sigma f([x,0],1) = P I \Sigma f ([x,0]) =  (P I \Sigma f) \circ l_{(f,X)} (x)
\]
for all $x \in X.$

We show that $l$ is natural. Let $h:(f,X) \to (g, Y)$ be a morphism in $\bf{Map}.$ Then,
\[
	 (P I \Sigma)(h) \circ l_{(f,X)}(x) = [h(x) , 0] =  l_{(g,Y)} \circ h(x)
 \]
 for all $x \in X.$
\end{proof}

These results suggest that there is another category larger than $\bf{FlowGS}$ and smaller than $\bf{WFlowGS}$ for which the constructions of Poincar\'e maps and suspensions become adjoint.
\begin{definition}
A weak morphism $(h,\sigma): (\Phi_1, X_1, S_1) \to (\Phi_2, X_2, S_2)$ in $\bf{WFlowGS}$ is \emph{rate-preserving} if 
\[
\begin{aligned}
	&\sigma\left( \Phi_1(x, t T_{\Phi_1}(x)) , \int_0^{s+t } R_{\Phi_1}(x)(u) d u -tT_{\Phi_1}(x) \right)\\ 
		&= \int_0^{s+t } R_{\Phi_2}(h(x))(u) d u -tT_{\Phi_2}(h(x)) 
	\end{aligned}
\]
for all $x \in S_1,$ $0 \leq t < 1$ and $s \in \mathbb{R},$ where $R_{\Phi_1}$ and  $R_{\Phi_2}$ are the same as  in Lemma \ref{lem_adj_1}.
\end{definition}
\begin{lemma}
If $(h,\sigma): (\Phi_1, X_1, S_1) \to (\Phi_2, X_2, S_2)$ in $\bf{WFlowGS}$ is rate-preserving, we have
\[
	\sigma(x, t T_\Phi(x)) = t \sigma(x, T_\Phi(x))
\]
for all $x \in S_1$ and $0 \leq t < 1.$
\end{lemma}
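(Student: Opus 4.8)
The plan is to extract the claim from the rate-preserving identity by specializing the free parameter $t$ to $0$. When $t = 0$ we have $\Phi_1(x, 0 \cdot T_{\Phi_1}(x)) = \Phi_1(x,0) = x$, and both of the subtracted terms $-tT_{\Phi_1}(x)$ and $-tT_{\Phi_2}(h(x))$ vanish. First I would therefore read off from the definition of rate-preserving, with $t = 0$ and $s \in \mathbb{R}$ arbitrary, the clean relation
\[
	\sigma\left( x, \int_0^{s} R_{\Phi_1}(x)(u)\,du \right) = \int_0^{s} R_{\Phi_2}(h(x))(u)\,du,
\]
valid for every $s \in \mathbb{R}$.

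Next I would evaluate the two integrals on the interval $[0,1]$. By the definition of $R_{\Phi_1}$ as a step function, the only summand active on $[0,1)$ is the $i = 0$ term, which is the constant $T_{\Phi_1}((P\Phi_1)^0(x)) = T_{\Phi_1}(x)$. Hence $\int_0^s R_{\Phi_1}(x)(u)\,du = s\,T_{\Phi_1}(x)$ for $0 \le s \le 1$, and likewise $\int_0^s R_{\Phi_2}(h(x))(u)\,du = s\,T_{\Phi_2}(h(x))$. Substituting these expressions into the displayed relation gives $\sigma(x, s\,T_{\Phi_1}(x)) = s\,T_{\Phi_2}(h(x))$ for all $s \in [0,1]$.

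With this in hand the claim is immediate. Taking $s = t$ with $0 \le t < 1$ yields $\sigma(x, t\,T_{\Phi_1}(x)) = t\,T_{\Phi_2}(h(x))$, while taking $s = 1$ yields $\sigma(x, T_{\Phi_1}(x)) = T_{\Phi_2}(h(x))$. Combining the two identities eliminates $T_{\Phi_2}(h(x))$ and produces exactly $\sigma(x, t\,T_{\Phi_1}(x)) = t\,\sigma(x, T_{\Phi_1}(x))$, as required.

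I do not expect a genuine obstacle here; the argument is a direct specialization followed by an elementary integral computation. The only points that require care are checking that the substitution $t = 0$ is permitted by the quantifier range in the definition of rate-preserving (it is, since $0 \in [0,1)$) and handling the endpoint $s = 1$ separately from $s \in [0,1)$, since the step function $R_{\Phi_1}(x)$ changes value at $u = 1$ but the integral over the closed interval $[0,1]$ is unaffected by the single point $u = 1$.
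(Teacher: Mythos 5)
Your proposal is correct and follows essentially the same route as the paper: both specialize the rate-preserving identity at $t=0$ and evaluate the step-function integral $\int_0^{s} R_{\Phi_1}(x)(u)\,du = s\,T_{\Phi_1}(x)$ for $s\in[0,1]$. The only cosmetic difference is the final step, where the paper obtains $T_{\Phi_2}(h(x)) = \sigma(x, T_{\Phi_1}(x))$ by citing Lemma \ref{lem_period}, whereas you derive the same identity self-containedly from the $s=1$ case.
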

\begin{proof}
We show this by a direct calculation. Let $x \in S_1$ and $0 \leq t < 1.$ Then, we have
\[
	\begin{aligned}
		\sigma(x, t T_{\Phi_1}(x)) &= \sigma\left( \Phi_1(x, 0\cdot T_{\Phi_1}(x)) , \int_0^{t+0 } R_{\Phi_1}(x)(u) d u -0\cdot T_{\Phi_1}(x) \right) \\
							&= \int_0^{t+0 } R_{\Phi_2}(h(x))(u) d u -0\cdot T_{\Phi_2}(h(x)) \\
							& = t T_{\Phi_2}(h(x)) = t \sigma(x, T_{\Phi_1}(x)).
	\end{aligned}
\]
Here, we used the result of Lemma \ref{lem_period}.
\end{proof}
\begin{lemma}
The identity morphism in $\bf{WFlowGS}$ is rate-preserving. The composition of two rate-preserving morphisms is again rate-preserving. 
\end{lemma}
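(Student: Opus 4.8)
The plan is to first rewrite the rate-preserving condition in the compact form
\[
\sigma\bigl( k_1([x,t]), \tau_1([x,t],s) \bigr) = \tau_2([h(x),t],s),
\]
where $k_1 = k_{(\Phi_1,X_1,S_1)}$, $\tau_1 = \tau_{(\Phi_1,X_1,S_1)}$, and $\tau_2 = \tau_{(\Phi_2,X_2,S_2)}$ are the maps from Lemma \ref{lem_adj_1}; this is merely a restatement once one recognizes that the two arguments of $\sigma$ on the left are exactly $k_1([x,t])$ and $\tau_1([x,t],s)$, and the right-hand side is $\tau_2([h(x),t],s)$. For the identity morphism on $(\Phi,X,S)$, the time part is $\sigma(y,r) = r$, so the left-hand side returns its second argument $\tau_1([x,t],s)$, which equals the right-hand side since $h = \mathrm{id}$ and $\Phi_2 = \Phi_1$. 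This settles the first assertion immediately.

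For the composition, let $(h_1,\sigma_1) : (\Phi_1,X_1,S_1)\to(\Phi_2,X_2,S_2)$ and $(h_2,\sigma_2):(\Phi_2,X_2,S_2)\to(\Phi_3,X_3,S_3)$ be rate-preserving, with composite time part $(\sigma_2\circ\sigma_1)(y,r) = \sigma_2(h_1(y),\sigma_1(y,r))$. Writing $k_i$, $\tau_i$ for the maps of Lemma \ref{lem_adj_1} attached to $(\Phi_i,X_i,S_i)$, I want to show
\[
(\sigma_2\circ\sigma_1)\bigl(k_1([x,t]),\tau_1([x,t],s)\bigr) = \tau_3([h_2 h_1(x),t],s).
\]
Unfolding the composite and applying the rate-preserving identity for $(h_1,\sigma_1)$ to the inner $\sigma_1$, the left-hand side becomes $\sigma_2\bigl(h_1(k_1([x,t])),\tau_2([h_1(x),t],s)\bigr)$.

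The crux is then to identify $h_1(k_1([x,t]))$ with $k_2([h_1(x),t])$, i.e.\ to show that $h_1$ intertwines the two copies of $k$. Here I would use that $(h_1,\sigma_1)$ is a weak morphism, so $h_1(\Phi_1(x,r)) = \Phi_2(h_1(x),\sigma_1(x,r))$ with $r = t\,T_{\Phi_1}(x)$; the preceding lemma gives $\sigma_1(x,t\,T_{\Phi_1}(x)) = t\,\sigma_1(x,T_{\Phi_1}(x))$, and Lemma \ref{lem_period} gives $\sigma_1(x,T_{\Phi_1}(x)) = T_{\Phi_2}(h_1(x))$. Combining these yields $h_1(k_1([x,t])) = \Phi_2(h_1(x),t\,T_{\Phi_2}(h_1(x))) = k_2([h_1(x),t])$. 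Substituting and applying the rate-preserving identity for $(h_2,\sigma_2)$ with base point $h_1(x)\in S_2$ collapses the left-hand side to $\tau_3([h_2 h_1(x),t],s)$, the desired equality.

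I expect the intertwining step $h_1\circ k_1 = k_2\circ \overline{h_1|_{S_1}}$ to be the main obstacle, since it is the one place where the weak-morphism structure, the scaling identity $\sigma(x,t\,T_\Phi(x)) = t\,\sigma(x,T_\Phi(x))$, and the first-return-time correspondence of Lemma \ref{lem_period} must all be invoked together; once this is in hand the two rate-preserving hypotheses chain mechanically. The remaining checks — that the composite pair $(h_2 h_1,\sigma_2\circ\sigma_1)$ is a genuine weak morphism preserving the sections — are already guaranteed by its membership in $\bf{WFlowGS}$, so no extra verification is needed there.
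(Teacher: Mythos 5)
Your proof is correct and follows essentially the same route as the paper: unfold the composite time part, apply the first rate-preserving identity, use the weak-morphism property together with the scaling lemma and Lemma \ref{lem_period} to rewrite the first argument as $k_2([h_1(x),t])=\Phi_2(h_1(x),tT_{\Phi_2}(h_1(x)))$, and then apply the second rate-preserving identity. Your compact reformulation $\sigma(k_1([x,t]),\tau_1([x,t],s))=\tau_2([h(x),t],s)$ and the explicit appeal to Lemma \ref{lem_period} in the intertwining step only make explicit what the paper's chain of equalities does inline.
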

\begin{proof}
The first statement is obvious. Let $(h_1,\sigma_1): (\Phi_1, X_1, S_1) \to (\Phi_2, X_2, S_2)$ and $(h_2,\sigma_2): (\Phi_2, X_2, S_2) \to (\Phi_3, X_3, S_3)$ be  rate-preserving morphisms. Then, we have
\[
\begin{aligned}
	&\sigma_2 \circ \sigma_1  \left(\Phi_1(x, t T_{\Phi_1}(x)) , \int_0^{s+t } R_{\Phi_1}(x)(u) d u -tT_{\Phi_1}(x) \right)\\
	& = \sigma_2 \left(h_1\left(\Phi_1(x, t T_{\Phi_1}(x))  \right), \sigma_1\left( \Phi_1(x, t T_{\Phi_1}(x)) , \int_0^{s+t } R_{\Phi_1}(x)(u) d u -tT_{\Phi_1}(x) \right) \right)\\
	& = \sigma_2 \left(h_1\left(\Phi_1(x, t T_{\Phi_1}(x))\right),  \int_0^{s+t } R_{\Phi_2}(h_1(x))(u) d u -tT_{\Phi_2}(h_1(x))  \right)\\
	& = \sigma_2 \left(\Phi_2(h_1(x), \sigma_1(x, t T_{\Phi_1}(x))),  \int_0^{s+t } R_{\Phi_2}(h_1(x))(u) d u -tT_{\Phi_2}(h_1(x))  \right)\\
	& = \sigma_2 \left(\Phi_2(h_1(x), t\sigma_1(x, T_{\Phi_1}(x))),  \int_0^{s+t } R_{\Phi_2}(h_1(x))(u) d u -tT_{\Phi_2}(h_1(x))  \right)\\
	& = \int_0^{s+t } R_{\Phi_3}(h_2\circ h_1(x))(u) d u -tT_{\Phi_3}(h_2\circ h_1(x)),
	\end{aligned}
\]
for all $x \in S_1,$ $0 \leq t < 1$ and $s \in \mathbb{R}.$
\end{proof}
Therefore, we can define a category $\bf{RWFlowGS},$ whose objects are flows with global Poincar\'e sections and whose morphisms are rate-preserving morphisms. If we denote the inclusion functors
by $J^-: \bf{FlowGS} \to \bf{RWFlowGS}$ and $J^+: \bf{RWFlowGS}  \to \bf{WFlowGS}$, it is clear that $I = J^+J^-.$

\begin{lemma}
Let $(\Phi, X, S)$ be an object in $\bf{FlowGS}.$ Then, the weak morphism $(k,\tau)_{ (\Phi, X, S)}: I \Sigma P I(\Phi, X, S) \to (\Phi, X, S)$ is rate-preserving.
\end{lemma}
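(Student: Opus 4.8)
The plan is to verify the defining identity of \textbf{rate-preserving} by direct substitution, the only real input being the explicit Poincar\'e return data of the suspension flow that constitutes the source object $I\Sigma P I(\Phi, X, S)$. Once those are computed, both sides of the identity reduce to the same expression, namely the defining formula for $\tau_{(\Phi,X,S)}$ from Lemma \ref{lem_adj_1}.

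First I would record the return data of the section $(S_{P\Phi})_0$ of the suspension. Writing $(\Phi_1, X_1, S_1) = I\Sigma P I(\Phi, X, S) = (\Sigma P\Phi,\, S_{P\Phi},\, (S_{P\Phi})_0)$, observe that for any homeomorphism $f$ the suspension flow carries $[y,0]$ to $[f(y),0]$ in exactly time one and meets $(X_f)_0$ at no smaller positive time, so that $T_{\Sigma f}([y,0]) = 1$ and $P(\Sigma f)([y,0]) = [f(y),0]$. Applying this with $f = P\Phi$ gives $T_{\Phi_1}([x,0]) = 1$ and $(P\Phi_1)^i([x,0]) = [(P\Phi)^i(x),0]$ for every $i \in \mathbb{Z}$, whence $R_{\Phi_1}([x,0])(u) = \sum_{i \in \mathbb{Z}} \chi_{[i,i+1)}(u) = 1$ identically and $\int_0^{s+t} R_{\Phi_1}([x,0])(u)\,du = s+t$.

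With these values the left-hand side of the rate-preserving identity collapses. Taking $\sigma = \tau_{(\Phi,X,S)}$ and $h = k_{(\Phi,X,S)}$, the inner arguments become $\Phi_1([x,0],\, t\,T_{\Phi_1}([x,0])) = \Sigma P\Phi([x,0],t) = [x,t]$ and $\int_0^{s+t} R_{\Phi_1}([x,0])(u)\,du - t\,T_{\Phi_1}([x,0]) = (s+t) - t = s$, so the left-hand side is simply $\tau_{(\Phi,X,S)}([x,t],s)$, which by Lemma \ref{lem_adj_1} equals $\int_0^{s+t} R_\Phi(x)(u)\,du - t\,T_\Phi(x)$. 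For the right-hand side, $k_{(\Phi,X,S)}([x,0]) = \Phi(x,0) = x$, so it reads $\int_0^{s+t} R_\Phi(x)(u)\,du - t\,T_\Phi(x)$ as well; the two sides coincide, and the weak morphism is rate-preserving.

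The argument is essentially bookkeeping; the one point demanding care is the clash of notation for the mapping-torus coordinate. The variables $x$ and $t$ in the general definition of rate-preserving range over $S_1$ and $[0,1)$, but here $S_1 = (S_{P\Phi})_0$ is itself a section of a mapping torus, so one must keep straight which symbol plays the role of the base point $[x,0] \in S_1$ and which is the suspension parameter producing the point $[x,t] \in S_{P\Phi}$. Once the identifications $T_{\Phi_1} \equiv 1$ and $R_{\Phi_1} \equiv 1$ are in place, no genuine obstacle remains, since the left-hand side is recognized verbatim as the definition of $\tau_{(\Phi,X,S)}$.
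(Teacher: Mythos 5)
Your proposal is correct and follows essentially the same route as the paper's proof: both compute $T_{\Sigma P\Phi}([x,0])=1$ (hence $R_{\Sigma P\Phi}\equiv 1$), collapse the left-hand side to $\tau_{(\Phi,X,S)}([x,t],s)$, and identify it with the right-hand side via $k_{(\Phi,X,S)}([x,0])=x$. Your explicit remark about keeping the mapping-torus coordinate separate from the suspension parameter is a sensible clarification but does not change the argument.
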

\begin{proof}
First, we note that 
\[
	 \int_0^{s+t } R_{\Sigma P \Phi}([x, 0])(u) d u -tT_{\Sigma P \Phi}([x,0]) = s 
\]
for all $[x,0] \in (S_{P \Phi})_0,$ $0 \leq t <1$ and $s \in \mathbb{R}$ because $T_{\Sigma P \Phi}([x,0]) = 1$ for all  $[x,0] \in (S_{P \Phi})_0.$
We calculate the following:
\[
\begin{aligned}
	&\tau_{(\Phi, X, S)}\left( \Sigma P \Phi([x,0], t T_{\Sigma P \Phi}([x,0])) , \int_0^{s+t } R_{\Sigma P \Phi}([x,0])(u) d u -tT_{\Sigma P\Phi}([x,0]) \right) \\
	&= \tau_{(\Phi, X, S)}\left( \Sigma P \Phi([x,0], t ) , s \right) \\
	& = \tau_{(\Phi, X, S)}\left( [x,t], s \right)\\
	& = \int_0^{s+t } R_{ \Phi}(x)(u) d u -tT_{\Phi}(x)\\
	&= \int_0^{s+t } R_{ \Phi}(k_{(\Phi, X, S)}([x,0]))(u) d u -tT_{\Phi}(k_{(\Phi, X, S)}([x,0])),
\end{aligned}
\]
for all  $[x,0] \in (S_{P \Phi})_0,$  $0 \leq t < 1$ and $s \in \mathbb{R}.$
\end{proof}
Thus, we have the following result:
\begin{lemma}
There is a natural transformation $(k,\tau) : J^- \Sigma P J^+\to 1_{\bf{RWFlowGS}}$ given 
by the restriction of the natural transformation $(k,\tau) : I \Sigma P I\to I$ defined in Lemma \ref{lem_adj_1}.
\end{lemma}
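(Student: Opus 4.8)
The plan is to carry over the components of the natural transformation $(k,\tau)$ from Lemma~\ref{lem_adj_1} unchanged and to check that they now constitute a natural transformation between the endofunctors $J^-\Sigma P J^+$ and $1_{\mathbf{RWFlowGS}}$ on $\mathbf{RWFlowGS}$. Since $\mathbf{FlowGS}$ and $\mathbf{RWFlowGS}$ share the same objects, and since all three inclusion functors act as the identity on objects, for every object $(\Phi,X,S)$ both source functors produce $\Sigma P\Phi$ (with its canonical section) and both target functors produce $(\Phi,X,S)$; hence the component $(k,\tau)_{(\Phi,X,S)}$ is already available. The immediately preceding lemma shows this component is rate-preserving, so it is a bona fide morphism of $\mathbf{RWFlowGS}$. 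The only genuinely new content is therefore naturality with respect to the enlarged morphism class, as Lemma~\ref{lem_adj_1} verified naturality only against the strict morphisms of $\mathbf{FlowGS}$.

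First I would fix a rate-preserving weak morphism $(h,\sigma):(\Phi_1,X_1,S_1)\to(\Phi_2,X_2,S_2)$ and trace it through the functors. The functor $P$ sends $(h,\sigma)$ to $h|_{S_1}$, the functor $\Sigma$ sends this to $\overline{h|_{S_1}}$, and $J^-$ regards the latter as a strict (hence rate-preserving) morphism with identity time-part. Thus $(J^-\Sigma P J^+)(h,\sigma)=\overline{h|_{S_1}}$, and the square to be verified is the equality of weak morphisms $(k,\tau)_{(\Phi_2,X_2,S_2)}\circ\overline{h|_{S_1}}=(h,\sigma)\circ(k,\tau)_{(\Phi_1,X_1,S_1)}$, which splits into a space-part and a time-part.

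For the space-part, evaluating each side at $[x,t]$ with $x\in S_1$ and $0\le t<1$ gives $\Phi_2(h(x),tT_{\Phi_2}(h(x)))$ on the left and, using that $(h,\sigma)$ is a weak morphism, $\Phi_2(h(x),\sigma(x,tT_{\Phi_1}(x)))$ on the right. These agree once we combine Lemma~\ref{lem_period} (which gives $T_{\Phi_2}(h(x))=\sigma(x,T_{\Phi_1}(x))$, valid because the morphism preserves sections) with the earlier identity $\sigma(x,tT_{\Phi_1}(x))=t\,\sigma(x,T_{\Phi_1}(x))$ for rate-preserving morphisms. For the time-part, forming the two composite time-functions in $\mathbf{WFlowGS}$, the left-hand side at $([x,t],s)$ equals $\int_0^{s+t}R_{\Phi_2}(h(x))(u)\,du-tT_{\Phi_2}(h(x))$, while the right-hand side equals $\sigma\big(\Phi_1(x,tT_{\Phi_1}(x)),\int_0^{s+t}R_{\Phi_1}(x)(u)\,du-tT_{\Phi_1}(x)\big)$; these coincide by the very definition of a rate-preserving morphism.

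The hard part will be precisely this last verification. Because the morphisms of $\mathbf{RWFlowGS}$ carry a nontrivial time-part, one must correctly assemble the composite time-functions of the two weak morphisms in each order and then recognize the resulting identity. The point is that the rate-preserving condition was defined exactly so that the time-part of the naturality square holds automatically; the space-part then needs only the two auxiliary lemmas quoted above. Once both parts are checked, the components are natural, and the restriction of $(k,\tau)$ is a natural transformation $J^-\Sigma P J^+\to 1_{\mathbf{RWFlowGS}}$ as claimed.
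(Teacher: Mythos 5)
Your proposal is correct and follows essentially the same route as the paper: the components are inherited unchanged (with the preceding lemma guaranteeing they are rate-preserving morphisms), and naturality against a rate-preserving morphism $(h,\sigma)$ splits into a space-part handled by Lemma~\ref{lem_period} together with the identity $\sigma(x,tT_{\Phi_1}(x))=t\,\sigma(x,T_{\Phi_1}(x))$, and a time-part that is verbatim the definition of rate-preserving. No substantive differences from the paper's argument.
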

\begin{proof}
It is sufficient to verify the naturality conditions. Let $(h,\sigma):(\Phi_1, X_1, S_1) \to (\Phi_2, X_2, S_2)$ be a morphism in $\bf{RWFlowGS}.$

If $[x,t] \in (S_1)_{P\Phi_1}$ with $0 \leq t <1,$ we have
\[
	\begin{aligned}
		h \circ k_{(\Phi_1, X_1, S_1)} ([x,t]) &= h\left( \Phi_1(x, t T_{\Phi_1}(x))\right)\\
										&= \Phi_2\left( h(x), \sigma(x, t T_{\Phi_1})(x)\right)\\
										&= \Phi_2\left( h(x), t\sigma(x, T_{\Phi_1})(x)\right)\\
										&= \Phi_2\left( h(x), tT_{\Phi_2}(x)\right)\\
										& = k_{(\Phi_2, X_2, S_2)} ([h(x),t]) \\
										& =  k_{(\Phi_2, X_2, S_2)}\circ \bar h ([x,t]).
	\end{aligned}
\]
Further, for all $[x,t] \in (S_1)_{P\Phi_1}$ with $0 \leq t <1$ and  $s \in \mathbb{R},$
\[
	\begin{aligned}
		&\sigma \circ \tau_{(\Phi_1, X_1, S_1)} ([x,t],s) \\
		&=\sigma\left(k_{(\Phi_1, X_1, S_1)} ([x,t]), \tau_{(\Phi_1, X_1, S_1)} ([x,t],s)  \right)\\
		&=\sigma\left(\Phi_1(x, t T_{\Phi_1}(x)), \int_0^{s+t } R_{\Phi_1}(x)(u) d u -tT_{\Phi_1}(x) \right)\\
		&=\int_0^{s+t } R_{\Phi_2}(h(x))(u) d u -tT_{\Phi_2}(h(x))\\
		& = \tau_{(\Phi_2, X_2, S_2)} ([h(x),t],s)\\
		&= \tau_{(\Phi_2, X_2, S_2)} \circ {\rm id} ([x,t],s),
	\end{aligned}
\]
where ${\rm id}$ is the time part of $J^-(\bar h).$
\end{proof}
Combining the results above, we obtain the desired result, which gives us the exact relation between the constructions of Poincar\'e maps and suspension flows.
\begin{theorem}
$J^- \Sigma \dashv P J^+.$
\end{theorem}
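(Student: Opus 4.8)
The plan is to establish the adjunction through an explicit unit and counit and then verify the two triangle identities; the point is that both required natural transformations have already been constructed. Write $L := J^- \Sigma$ and $R := P J^+$. Since $I = J^+ J^-$, the composites unwind as $R L = P J^+ J^- \Sigma = P I \Sigma$ and $L R = J^- \Sigma P J^+$. Consequently the natural transformation $l : 1_{\bf{Map}} \to P I \Sigma$ furnishes the unit $\eta : 1_{\bf{Map}} \to R L$, and the natural transformation $(k, \tau) : J^- \Sigma P J^+ \to 1_{\bf{RWFlowGS}}$ furnishes the counit $\varepsilon : L R \to 1_{\bf{RWFlowGS}}$. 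What remains is to check the triangle identities $\varepsilon L \circ L \eta = 1_L$ and $R \varepsilon \circ \eta R = 1_R$.

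For $R \varepsilon \circ \eta R = 1_R$, fix $B = (\Phi, X, S)$, so that $R B = (P \Phi, S)$. Because $R$ takes values in $\bf{Map}$, morphisms there carry no time component and only the underlying maps need to be compared. The unit component $\eta_{R B} = l_{(P \Phi, S)}$ sends $x \in S$ to $[x, 0]$, while $R(\varepsilon_B)$ is the restriction of $k_{(\Phi, X, S)}$ to the section $(S_{P\Phi})_0$, which sends $[x, 0]$ to $\Phi(x, 0 \cdot T_\Phi(x)) = x$. Their composite is therefore the identity on $S$.

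For $\varepsilon L \circ L \eta = 1_L$, fix $A = (f, X)$, so that $L A = (\Sigma f, X_f, (X_f)_0)$. The governing computation is that the suspension satisfies $T_{\Sigma f}([x, 0]) = 1$, whence $R_{\Sigma f}([x, 0])(u) \equiv 1$ for every $[x, 0] \in (X_f)_0$. The component $L(\eta_A) = \overline{l_{(f, X)}}$ is the image under $J^- \Sigma$ of a strict morphism, so it has identity time part and acts by $[x, t] \mapsto [[x, 0], t]$; the counit component $\varepsilon_{L A} = (k, \tau)_{(\Sigma f, X_f, (X_f)_0)}$ acts on underlying points by $[[x, 0], t] \mapsto \Sigma f([x, 0], t \cdot 1) = [x, t]$, so the space parts compose to the identity. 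For the time parts, the formula of Lemma \ref{lem_adj_1} together with $R_{\Sigma f} \equiv 1$ gives $\tau_{L A}([[x, 0], t], s) = \int_0^{s + t} 1 \, du - t = s$; composing this with the identity time part of $L(\eta_A)$ according to the rule $\tau_2 \circ \tau_1(x, t) = \tau_2(h_1(x), \tau_1(x, t))$ again yields the identity. Hence both parts of the weak morphism $\varepsilon_{L A} \circ L(\eta_A)$ are trivial and $\varepsilon_{L A} \circ L(\eta_A) = 1_{L A}$.

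The routine parts are unwinding the functor composites and evaluating $k$ and $\tau$ at height $t = 0$ and at the section of the suspension. I expect the main obstacle to be the triangle identity for $L$: unlike the one for $R$, it is an equation in $\bf{RWFlowGS}$, where morphisms are genuine weak morphisms, so the time components must be tracked and one must confirm that the nontrivial reparametrization $\tau_{L A}$ collapses to the identity. The computation $R_{\Sigma f} \equiv 1$ is precisely what forces this collapse, and it is what makes the correspondence an adjunction on the nose rather than merely an object-level equivalence.
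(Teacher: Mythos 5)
Your proposal is correct and follows essentially the same route as the paper: it takes $l$ as the unit and $(k,\tau)$ as the counit, reduces the claim to the two triangle identities, and verifies them by the same computations, with the key point in both cases being that $T_{\Sigma f}\equiv 1$ on the zero section forces $R_{\Sigma f}\equiv 1$ and hence collapses the reparametrization $\tau$ to the identity. No substantive differences.
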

\begin{proof}
We verify that the triangle identities are satisfied by $l : 1_{\bf{Map}} \to P I \Sigma= (P J^+)(J^- \Sigma)$ and $(k,\tau) : (J^- \Sigma)( P J^+)\to 1_{\bf{RWFlowGS}}.$

In what follows, we omit $J^+$ or $J^-$ for ease of notation. 

Let $(f,X)$ be an object in $\bf{Map}.$ Then, we have
\[
\begin{aligned}
	&k_{\Sigma(f,X)} \circ \Sigma(l_{(f,X)}) ([x,t]) \\
	&=k_{\Sigma(f,X)} \left( [l_{(f,X)}(x) , t]\right)\\
	& = \Sigma f \left( l_{(f,X)}(x), t\right)\\
	& = [x,t]
\end{aligned}
\]
for all $x\in X$ and $0 \leq t < 1.$ Further, we have
\[
\begin{aligned}
	&\tau_{\Sigma(f,X)} \circ {\rm id} ([x,t],s)\\
	& = \tau_{\Sigma(f,X)} \left(  [l_{(f,X)}(x) , t], s\right)\\
	& = \int_0^{s+t } R_{\Sigma f}(l_{(f,X)}(x))(u) d u -tT_{\Sigma f}(l_{(f,X)}(x))\\
	& = s
\end{aligned}
\]
for all $x\in X,$ $0 \leq t < 1$ and $s \in \mathbb{R}.$
These results  show that the following diagram commutes in $\bf{RWFlowGS}.$
\[
\begin{diagram}
	\node{\Sigma(f,X)}
	\arrow{e,t}{\Sigma\left( l_{(f,X)}\right)}
	\arrow{se,r}{1_{\Sigma(f,X)}}
	\node{\Sigma P\Sigma(f,X)}
	\arrow{s,r}{(k,\tau)_{\Sigma(f,X)}}\\
	\node{}
	\node{\Sigma(f,X)}
\end{diagram}
\]

Let $(\Phi, X, S)$ be an object in $\bf{RWFlowGS}.$ Then, we have 
\[
\begin{aligned}
	& P\left((k,\tau)_{(\Phi, X, S)}\right) \circ l_{P(\Phi, X, S)} (x)\\
	&= k_{(\Phi, X, S)} |_{(S_{P\Phi})_0} \left( [x, 0]\right)\\
	& = k_{(\Phi, X, S)}\left( [x, 0]\right)\\
	& = \Phi(x, 0) = x
\end{aligned}
\]
for all $x \in S.$ Therefore, the following diagram commutes in $\bf{Map}.$
\[
\begin{diagram}
	\node{P(\Phi, X, S)}
	\arrow{e,t}{ l_{P(\Phi, X, S)}}
	\arrow{se,r}{1_{P(\Phi, X, S)}}
	\node{P\Sigma P(\Phi, X, S)}
	\arrow{s,r}{P\left((k,\tau)_{(\Phi, X, S)}\right)}\\
	\node{}
	\node{P(\Phi, X, S)}
\end{diagram}
\]
Thus, we conclude that $J^- \Sigma \dashv P J^+.$
\end{proof}
The next corollary is an immediate consequence of Remark \ref{rem_bij} and the injectivity of $l_{(f,X)}.$ 
\begin{corollary}\label{cor_eq}
The categories $\bf{Map}$ and $\bf{RWFlowGS}$ are equivalent.
\end{corollary}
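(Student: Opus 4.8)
The plan is to upgrade the adjunction $J^-\Sigma \dashv P J^+$ established in the preceding theorem to an \emph{adjoint} equivalence, since an adjoint equivalence in particular witnesses an equivalence of the two categories. I would invoke the standard criterion (see, e.g., \cite{mac1998categories, riehl2017category}) that an adjunction is an adjoint equivalence exactly when its unit and counit are natural isomorphisms. Here the unit is $l : 1_{\mathbf{Map}} \to P I \Sigma$ and the counit is $(k,\tau) : J^-\Sigma P J^+ \to 1_{\mathbf{RWFlowGS}}$, and their naturality has already been verified in the lemmas above; what remains is to show that each component of each is an \emph{isomorphism} in the relevant category.

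For the unit, I would argue that each component $l_{(f,X)} : X \to (X_f)_0$, $x \mapsto [x,0]$, is an isomorphism in $\mathbf{Map}$. It is surjective onto the section $(X_f)_0$ by construction and injective as noted, so it is a continuous bijection between topological manifolds of the same dimension $\dim X$. By the invariance of domain theorem it is therefore an open map, hence a homeomorphism; and since it is already a morphism in $\mathbf{Map}$, its set-theoretic inverse automatically intertwines the dynamics, so it is an isomorphism in $\mathbf{Map}$. Consequently $l$ is a natural isomorphism.

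For the counit, I would first observe that, by Remark \ref{rem_bij}, each component map $k_{(\Phi,X,S)}$ is a continuous bijection from the mapping torus $S_{P\Phi}$ onto $X$; these are again manifolds of equal dimension, so invariance of domain makes $k_{(\Phi,X,S)}$ a homeomorphism and hence $(k,\tau)_{(\Phi,X,S)}$ a topological equivalence, i.e.\ an isomorphism in $\mathbf{WFlowGS}$. Since this component is already known to be rate-preserving, the only genuinely new point is to confirm that its inverse weak morphism is again rate-preserving, so that the component is an isomorphism in the refined category $\mathbf{RWFlowGS}$ and not merely in $\mathbf{WFlowGS}$. I would verify this by inverting the naturality square: rate-preservation of a morphism is precisely the commutativity of the corresponding naturality square for $(k,\tau)$, and since all maps in that square are isomorphisms in $\mathbf{WFlowGS}$, the square for the inverse morphism commutes as well. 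With both unit and counit shown to be natural isomorphisms, the criterion yields that $J^-\Sigma \dashv P J^+$ is an adjoint equivalence, whence $\mathbf{Map}$ and $\mathbf{RWFlowGS}$ are equivalent.

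The step I expect to be the main obstacle is the last one for the counit: passing from ``isomorphism in $\mathbf{WFlowGS}$'' to ``isomorphism in $\mathbf{RWFlowGS}$.'' The topological equivalence furnished by invariance of domain lives a priori only in the larger category, and one must be careful that the inverse weak morphism does not leave the rate-preserving subcategory; this is exactly where the precise form of the rate-preserving condition, and its equivalence with the naturality of $(k,\tau)$, is needed. The two appeals to invariance of domain are the other nontrivial inputs, but they are justified because in each case the two spaces linked by the continuous bijection are topological manifolds of the same dimension.
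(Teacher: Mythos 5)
Your proposal is correct and follows essentially the same route as the paper, which derives the corollary by observing that the unit $l$ and the counit $(k,\tau)$ of the adjunction $J^-\Sigma \dashv PJ^+$ are natural isomorphisms, the components being continuous bijections between manifolds of equal dimension and hence homeomorphisms by invariance of domain. The one point you treat more carefully than the paper --- checking that the inverse of each counit component is again rate-preserving, via the equivalence of rate-preservation with commutativity of the corresponding naturality square and the invertibility of all arrows in that square --- is a genuine subtlety that the paper leaves implicit, and your argument for it is sound.
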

We remark that the rate-preserving condition can always be assumed for topologically equivalent flows.
\begin{theorem}\label{thm_eq}
Let $(\Phi_1, X_1, S_1)$ and $(\Phi_2, X_2, S_2)$ be isomorphic in $\bf{WFlowGS}.$ Then, they are isomorphic in $\bf{RWFlowGS}.$
\end{theorem}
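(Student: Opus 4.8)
The plan is to argue entirely at the level of the functors and the counit isomorphism already established, rather than to massage the given weak isomorphism into a rate-preserving one by hand. The guiding observation is that although an isomorphism $(h,\sigma)\colon (\Phi_1,X_1,S_1)\to(\Phi_2,X_2,S_2)$ in $\bf{WFlowGS}$ need not itself be rate-preserving, the only invariant it carries that survives under $P$ is the conjugacy class of its Poincar\'e map, and this data can be re-lifted into $\bf{RWFlowGS}$ through the equivalence of Corollary \ref{cor_eq}.

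Concretely, I would proceed in three steps. First, apply the Poincar\'e map functor $P\colon \bf{WFlowGS}\to\bf{Map}$ to the hypothesized isomorphism; since functors preserve isomorphisms, $P(h,\sigma)=h|_{S_1}$ is an isomorphism in $\bf{Map}$, so $(P\Phi_1,S_1)$ and $(P\Phi_2,S_2)$ are topologically conjugate. Second, apply the functor $J^-\Sigma\colon \bf{Map}\to\bf{RWFlowGS}$ to this conjugacy, obtaining an isomorphism $J^-\Sigma(h|_{S_1})\colon \Sigma P(\Phi_1,X_1,S_1)\to \Sigma P(\Phi_2,X_2,S_2)$ in $\bf{RWFlowGS}$. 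Third, invoke the counit $(k,\tau)\colon J^-\Sigma PJ^+\to 1_{\bf{RWFlowGS}}$: because the adjunction $J^-\Sigma\dashv PJ^+$ is an adjoint equivalence (Corollary \ref{cor_eq}), each component $(k,\tau)_{(\Phi_i,X_i,S_i)}\colon \Sigma P(\Phi_i,X_i,S_i)\to(\Phi_i,X_i,S_i)$ is an isomorphism in $\bf{RWFlowGS}$. Composing these,
\[
(k,\tau)_{(\Phi_2,X_2,S_2)}\circ J^-\Sigma(h|_{S_1})\circ (k,\tau)_{(\Phi_1,X_1,S_1)}^{-1}
\]
is an isomorphism $(\Phi_1,X_1,S_1)\to(\Phi_2,X_2,S_2)$ in $\bf{RWFlowGS}$, which is exactly the claim.

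The step I expect to be the crux is the third one: it is essential that the counit components are genuine isomorphisms \emph{in} $\bf{RWFlowGS}$, that is, that both $(k,\tau)_{(\Phi,X,S)}$ and its inverse are rate-preserving, rather than merely topological equivalences in $\bf{WFlowGS}$. This is precisely what the adjoint equivalence of Corollary \ref{cor_eq} supplies; the remaining ingredients are only the formal facts that functors preserve isomorphisms and that isomorphisms compose. If an explicit description were preferred, the composite above unwinds to the weak morphism sending $\Phi_1(x,tT_{\Phi_1}(x))\mapsto \Phi_2(h(x),tT_{\Phi_2}(h(x)))$ for $x\in S_1$ and $0\le t<1$, whose rate-preserving property can be verified directly using $T_{\Phi_2}(h(x))=T_{\Phi_1}(x)$ from Lemma \ref{lem_period}; but the categorical route avoids this computation altogether.
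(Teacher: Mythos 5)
Your proposal is correct and follows essentially the same route as the paper's own proof: the paper likewise applies the functor $\Sigma P$ to the given isomorphism to obtain an isomorphism in $\bf{RWFlowGS}$ and then conjugates by the counit components $(k,\tau)_{(\Phi_i,X_i,S_i)}$, which are isomorphisms there. Your version merely spells out the composite $(k,\tau)_{(\Phi_2,X_2,S_2)}\circ J^-\Sigma(h|_{S_1})\circ (k,\tau)_{(\Phi_1,X_1,S_1)}^{-1}$ that the paper leaves implicit.
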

\begin{proof}
By the functoriality of $\Sigma P,$ $\Sigma P(\Phi_1, X_1, S_1)$ and $\Sigma P(\Phi_2, X_2, S_2)$ are isomorphic in $\bf{RWFlowGS}.$ Because $(k,\tau)$ gives isomorphisms, we conclude that $(\Phi_1, X_1, S_1)$ and $(\Phi_2, X_2, S_2)$ are isomorphic in $\bf{RWFlowGS}.$
\end{proof}
\section{Concluding  remarks}\label{sec_conclude}
The categorical equivalence of Corollary \ref{cor_eq} enables us to obtain correspondences between various concepts of flows and map dynamical systems. For example, Theorem \ref{thm_eq} implies that the topological conjugacy of map dynamical systems is categorically equivalent to the topological equivalence of flows. This provides further justification for the use of topological equivalence in the study of flows, in addition to the usual argument that topological conjugacy is too strict. 

We also observe a lack of correspondence for some notions. As flows with global Poincar\'e sections do not have equilibria, it follows that map dynamical systems do not have a concept corresponding to them under the equivalence obtained here. It would be interesting to consider whether there exists another pair of functors under which fixed points correspond to equilibria. A candidate will be the time-one map because it corresponds to the discretization functor, which has been considered in \cite{cestau2017prolongations}. However, it is known that this construction is not very expressive, and it is unclear whether an interesting equivalence can be found \cite{bonomo2020continuous}. 

\section*{Acknowledgements}
This study was supported by a Grant-in-Aid for JSPS Fellows (20J01101). This work was also supported by the Research Institute for Mathematical Sciences, an International Joint Usage/Research Center located in Kyoto University.
\bibliographystyle{plain}
\bibliography{poincare_suspension}
\end{document}